\newtheorem{defn}{Definition}
\newtheorem{theorem}{Theorem}
\newtheorem{lemma}[theorem]{Lemma}
\newtheorem{corollary}{Corollary}[theorem]
\title{Eigenvalue Interlacing of Bipartite Graphs and Construction of Expander Code using Vertex-split of a Bipartite Graph}
\author{ Machasri Manickam\\ email \href{mailto:machasri.m2019@vit.ac.in}{machasri.m2019@vit.ac.in} 
   \and Kalyani Desikan \\ email \href{mailto:kalyanidesikan@vit.ac.in}{kalyanidesikan@vit.ac.in} }
\affil{Vellore Institute of Technology, Chennai, India.}
\begin{document}

\maketitle
\begin{abstract}
The second largest eigenvalue of a graph is an important algebraic parameter which is related with the expansion, connectivity and randomness properties of a graph. Expanders are highly connected sparse graphs. In coding theory, Expander codes are Error Correcting codes made up of bipartite expander graphs. In this paper, first we prove the interlacing of the eigenvalues of the adjacency matrix of the bipartite graph with the eigenvalues of the bipartite quotient matrices of the corresponding graph matrices. Then we obtain bounds for the second largest and second smallest eigenvalues. Since the graph is bipartite, the results for Laplacian will also hold for Signless Laplacian matrix. We then introduce a new method called vertex-split of a bipartite graph to construct asymptotically good expander codes with expansion factor $\frac{D}{2}<\alpha < D$ and $\epsilon<\frac{1}{2}$ and prove a condition for the vertex-split of a bipartite graph to be $k-$connected with respect to $\lambda_{2}.$ Further, we prove that the vertex-split of $G$ is a bipartite expander. Finally, we construct an asymptotically good expander code whose factor graph is a graph obtained by the vertex-split of a bipartite graph.
\end{abstract}
\textbf{Keywords:}   expander code, vertex-split, second largest eigenvalue, bipartite graph, quotient matrix.\\
\textbf{MSC Classification :} 05C40, 05C48, 05C50.
\section{Introduction}
Let $G$ be a finite graph. The adjacency matrix $A(G)$ of $G$ is an $n \times n$ matrix $A=[a_{ij}]$, where $a_{ij}=	1$ if $v_{i}$ and $v_{j}$ are adjacent, otherwise it is $0$. Let $\lambda_{1}\geq \lambda_{2} \geq \dots \geq \lambda_{n}$ be the eigenvalues of $A$ known as the spectrum of $G.$ The Laplacian matrix of $G$ is $L(G) = D(G) - A(G)$ where $D(G)$ is the diagonal degree matrix. Let $\mu_{1}\geq \mu_{2}\geq \mu_{3}\geq \dots \geq \mu_{n-1}\geq \mu_{n}$ be the eigenvalues of the Laplacian matrix. The signless Laplacian matrix of $G$ is $Q (G) = D(G) + A(G)$. For a bipartite graph, Laplacian and signless Laplacian eigenvalues are the same.\\
 \\ 
\indent Expanders are graphs which are sparse but highly connected. The word sparse means that the number of edges of $G$ is much less than the possible number of edges of $G$. Expander has wide application in various areas of computer science including  pseudorandomness, complexity theory, coding theory, algorithm design, cryptography, etc.\\

Depending on the use, the term expander has many meanings. An 
$(n,m,d,\gamma,\alpha)-$ \textit{expander} is a bipartite graph $H=(X,Y,E)$ where $|X|=n,|Y|=m,$ $d(x)=d$ for all $x\in X$ and for every $S\subseteq X$, $|S|\leq \gamma n$, we have the set of vertices $N(S)\subseteq Y$ such that $|N(S)|\geq \alpha |S|,$ where $\gamma$ and $\alpha$ are positive constants. To obtain good expansion, $\alpha$ should be high. An expander is said to be a lossless expander when $\alpha$ is closer to $d.$ Expander code has rate at least $1-\frac{m}{n}.$ Therefore, smaller $m$ implies codes with higher rate.\\

An Error-Correcting Code (ECC) is a type of encoding used in the theory of coding to transfer messages as binary numbers in a way that allows the message to be decoded even if some bits are reversed. The error-correcting codes known as expander codes are constructed from bipartite expander graphs.\\

A collection of strings known as codewords form an error correcting code denoted by $C$. Block length of an ECC is the number of elements in the code word and it is denoted by $n$. The codewords consist of $n$ symbols from $\sigma$ which is the alphabet set. A code is referred to as $(n,k)_{q}$ code where $|\sigma | =q$ and $|C|=q^{k}.$ $k$ indicates how many informational symbols are contained in each codeword and $R(C)=\frac{k}{n}$ is the rate of the code. The smallest Hamming distance between two different codewords of $C$ is the distance $D(C)$ of the code. The parity-check and generator matrix views of the code give reasons for seeing linear codes as graphs and building them using graph-theoretic methods.
To express the properties of the code from the characteristics of the graph, we can interpret the parity check matrix for a $[n,n-m]_{2}$ code as representing an $n\times m$ bipartite graph. Low Density Parity Check (LDPC) codes are an interesting family of codes since they appear as sparse graphs in the graph view because there are few $1's$ in each row and column of the parity-check matrix.\\

In Section \ref{sec3} we obtain the upper bound for the second largest adjacency eigenavalue, lower bound for the second smallest adjacency eigenvalue and upper bound for the second largest Laplacian eigenvalue of a connected bipartite graph $G.$ To obtain sharper bounds, we discuss some possible cases with respect to the bipartitions of $G$ in a minimally connected bipartite graph and derive bounds for the second largest adjacency eigenvalue and second smallest adjacency eigenvalue in terms of $n$, the number of vertices.\\

In Section \ref{sec4}, we introduce a new concept called vertex-split of bipartite graph and we prove a condition for the vertex-split of a bipartite graph to be $k-$connected with respect to $\lambda_{2}.$ We prove theorems related to connectivity and prove the expansion of vertex-split of a bipartite graph. Finally, in Section \ref{sec5} we show that the vertex-split of a biregular bipartite graph forms an expander code.

\section{Related work}
Eigenvalues are often difficult to compute. Therefore, obtaining bounds for eigenvalues is useful. In literature, bounds have been found for the second largest eigenvalue of some family of graphs. Chang An \cite{CH} obtained upper and  lower bounds for the second largest eigenvalue of a tree. The lower bounds on the second largest eigenvalue of a regular graph with given girth were obtained by Patrik Solv \cite{PS}. Mehatari and Kannan \cite{RR} derived bounds for the second largest and second smallest eigenvalues of adjacency matrices, normalized adjacency matrices and Laplacian matrices of regular graphs. In 1988, Powers \cite{PW} gave some upper bounds of second largest eigenvalue for general graphs and bipartite graphs. In 2012, Mingqing Zhaim et al. \cite{MHB} presented upper bounds for the second largest eigenvalue of connected graphs and particularly, for bipartite graphs.\\

Expander codes which are constructed from unbalanced bipartite expander graphs with expansion factor $(1-\epsilon)D$ for $\epsilon< \frac{1}{2}$ are said to be asymptotically good codes. Initially, constructions with $\alpha \approx \frac{D}{2}$ were known explicitly. So the requirement is to to give explicit construction of codes with expansion higher than $\frac{D}{2}$. In 2002, Capalbo et al. \cite{Cap} presented an explicit construction with expansion $(1-\epsilon)D$ where $D$ is the degree of the left side partition of $G$ for any desired $\epsilon> 0$, and imbalance ratio $\frac{m}{n}$. Using the edge vertex  matrix is one method of building an unbalanced expander, which Tanner \cite{tan} introduced and Sipser and Spielman \cite{ss}-\cite{daniel} employed. Also, it is observed by Zemor \cite{zemor} that if the edge vertex incidence graph's underlying graph is a bipartite graph, the decoding technique is straightforward.\\ 
In this work, we use vertex-split of a bipatite graph to construct expanders codes with expansion factor $\frac{D}{2}<\alpha < D$ and $\epsilon<\frac{1}{2}$.
\section{Preliminaries}
\begin{defn} \cite{CHL}(Quotient matrix)
Let $$M=\begin{bmatrix} 
M_{11}& \cdots & M_{1t}\\
\vdots & \ddots & \vdots\\
M_{t1}& \cdots& M_{tt}
\end{bmatrix}
$$be a real matrix of order n and $M_{ij}$ be the blocks of M, where i,j $= {1,2,\dots ,t}$. Then B(M)=($b_{ij}$) is called the Quotient Matrix of M where $b_{ij}$ is the sum of
all entries in $M_{ij}$ divided by the number of rows of $M_{ij}$.
\end{defn}
\begin{defn} \cite{BH}(Interlacing)
Consider two sequences of real numbers: \\ $\xi_{1},\xi_{2},\dots, \xi_{n}$ and $\eta_{1},\eta_{2},\dots,\eta_{m}$ with $m \leq n$.
The second sequence is said to interlace the first one whenever $\xi_{i} \leq \eta_{i} \leq \xi_{n-m+i}$ for $i = 1, 2,\dots,m.$ 
The interlacing is called tight if there exists an integer $k \in [0,m]$ such that $\xi_{i} = \eta_{i}$ for $1 \leq i \leq k$ and $\xi_{n-m+i} = \eta_{i}$ for $k + 1\leq i \leq m$.
\end{defn}
\begin{defn}
A graph is said to be minimally connected if removal of any one edge disconnects the graph.
\end{defn}
\begin{lemma}\label{L1}{(\cite{WH1}-\cite{HMF})}
Let ${A_{Q}}$ be the quotient matrix of a symmetric matrix $A$ whose rows and columns are partitioned according to a partitioning 
\\ $(X_{1} , X_{2} , \dots , X_{m} ).$ Then 
\begin{center}
\begin{enumerate}
\begin{enumerate}
\begin{enumerate}
   \item The eigenvalues of ${A_{Q}}$ interlace the eigenvalues of $A$.
   \item If the interlacing is tight then the partition is equitable.
\end{enumerate}
\end{enumerate}
\end{enumerate}
\end{center}
\end{lemma}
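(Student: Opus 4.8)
The plan is to realize the quotient matrix $A_Q$ as a compression of $A$ onto the subspace spanned by the characteristic vectors of the partition, and then invoke Cauchy's interlacing theorem for principal submatrices of a symmetric matrix. First I would introduce the characteristic matrix $S$ of the partition $(X_1,\dots,X_m)$: the $n\times m$ matrix whose $j$-th column is the indicator vector of $X_j$. Writing $n_j=|X_j|$, one checks directly that $S^{T}S=\operatorname{diag}(n_1,\dots,n_m)$ and that the quotient matrix can be written as $A_Q=(S^{T}S)^{-1}S^{T}AS$, since the $(i,j)$ entry of $S^{T}AS$ is the sum of all entries of the block $A_{ij}$, and dividing by $n_i$ yields exactly the average row sum demanded by the definition of the quotient matrix.

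To symmetrize, I would normalize by setting $\hat{S}=S(S^{T}S)^{-1/2}$, so that $\hat{S}^{T}\hat{S}=I_m$; that is, $\hat{S}$ has orthonormal columns. The matrix $\hat{B}=\hat{S}^{T}A\hat{S}$ is then symmetric and similar to $A_Q$ (conjugation by $(S^{T}S)^{1/2}$ sends one to the other), hence shares its eigenvalues $\eta_1\geq\cdots\geq\eta_m$. Extending $\hat{S}$ to an orthogonal matrix $[\hat{S}\ \hat{T}]$, the matrix $\hat{B}$ appears as the leading $m\times m$ principal submatrix of $[\hat{S}\ \hat{T}]^{T}A[\hat{S}\ \hat{T}]$, which is orthogonally similar to $A$. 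Part (a) then follows immediately from Cauchy interlacing applied to this principal submatrix, since $A_Q$ and $\hat{B}$ have identical spectra.

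For part (b) the plan is to track the equality conditions. Tight interlacing forces $\eta_i=\xi_i$ for $1\le i\le k$ and $\eta_i=\xi_{n-m+i}$ for $k<i\le m$; via the Courant--Fischer characterization one extracts, for each such extremal index, an eigenvector $v$ of $\hat{B}$ whose lift $\hat{S}v$ is a genuine eigenvector of $A$ with the same eigenvalue. Collecting these relations over all $m$ indices shows that $\operatorname{col}(\hat{S})$ is $A$-invariant, i.e.\ $A\hat{S}=\hat{S}\hat{B}$, equivalently $AS=SA_Q$. Reading this matrix identity off block by block says precisely that every row sum of each block $A_{ij}$ equals the constant $(A_Q)_{ij}$, which is the definition of an equitable partition.

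The main obstacle I anticipate is the equality analysis in part (b): passing from the numerical coincidences $\eta_i=\xi_{\sigma(i)}$ to the structural identity $A\hat{S}=\hat{S}\hat{B}$ requires care, since tightness a priori only asserts equality of eigenvalues, not of eigenvectors. The standard device is to argue that when the extremal Rayleigh quotients restricted to $\operatorname{col}(\hat{S})$ attain the global extremal eigenvalues of $A$, the optimizing vectors must be true eigenvectors of $A$ rather than merely of the compression $\hat{B}$; gathering this eigenvector information from both the top $k$ and bottom $m-k$ indices is what pins down the whole invariant subspace. Verifying that these two families together force invariance of all of $\operatorname{col}(\hat{S})$—and correctly handling the middle indices—is the delicate point of the argument.
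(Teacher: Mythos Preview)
Your approach is correct and is, in fact, exactly the classical Haemers argument that the cited references \cite{WH1}--\cite{HMF}, \cite{WH2} contain. Note, however, that the paper does \emph{not} supply its own proof of this lemma: it is stated in the Preliminaries as a quoted result, with no proof given. So there is no ``paper's own proof'' to compare against here.

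That said, the machinery you set up---the characteristic matrix $\tilde{S}$ of the partition, the normalization $S=\tilde{S}K^{-1/2}$ with $K=\operatorname{diag}(|X_1|,\dots,|X_m|)$, and the identity $A_Q=S^{T}AS$---is precisely what the paper later invokes inside the proof of Theorem~\ref{T1}, where it writes ``From the proof of Lemma~\ref{L1} \cite{WH2} we have $A_Q=S^{T}AS$''. So your reconstruction is consistent with, and indeed anticipates, the toolkit the paper relies on downstream. Your caution about part~(b) is well placed: the passage from tight interlacing to $A$-invariance of $\operatorname{col}(\hat{S})$ (equivalently $AS=SA_Q$) is the genuinely nontrivial step, and the inductive Rayleigh-quotient argument you outline is the standard way to close it.
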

\begin{defn}\label{ver}\cite{HLW} (Vertex expander)
A graph $G$ with $n$ vertices is said to be a vertex expander if $|N(S)|\geq A |S|$ for all $S\subseteq V : |S| \leq \frac{n}{2}$, where $A$ is a constant and $N(S)$ is the neighbourhood of $S$ in $G$ not in ${S}$.
\end{defn}
\begin{defn}\label{spec}\cite{HLW} (Spectral expansion) 
 The spectral expansion of graph $G$ is defined by $\lambda=max\{|\lambda_{2}|, |\lambda_{n}|\}$.
\end{defn}
\begin{defn}\label{bip}\cite{AL}
Let $G=(V,E)$ be a graph. For a subset $S$ of $V$ let
$N(S) = \{v\in V: vs\in E$ for some $s\in S\}$.
An $(n,d,c)-$expander is a bipartite graph on the sets of vertices $X$ and $Y$,
where $|X|=|Y|=n$ , the maximal degree of a vertex is $d$, and for every set
$S\subseteq X$ of cardinality $|S|=\alpha\leq \frac{n}{2}$, $|N(S)|\geq (1+c(1-\frac{\alpha}{n}))\alpha.$
\end{defn}

\begin{defn} \cite{Cap}
A $D-$left regular bipartite graph $G=(X\cup Y, E)$ where $|X|=n$ and $|Y|=m$ such that for all $S\subseteq X $ with $|S|\leq \gamma n,$ $|N(S)|\geq \alpha |S|,$ where $\gamma$ and $\alpha$ are positive constants is known as an $(n,m,D,\gamma, D(1-\epsilon))$ expander.
\end{defn}
\noindent \textbf{Note :}\\
Let $\lambda_{1}\geq \lambda_{2}\geq \dots \geq \lambda_{n}$ be the adjacency eigenvalues of $G$ and let $\lambda_{1}'\geq \lambda_{2}'\geq \dots \geq \lambda_{n}'$ be the adjacency eigenvalues of $G'$. Assume that $\gamma_{i}=\frac{\lambda_{i}'}{d}$ be the normalized eigenvalues of $G'$.
Let $\lambda'$ be the spectral expansion of $G'.$ Then $\gamma=\frac{\lambda'}{d}$ is the spectral expansion of $G'$ with respect to the normalized adjacency eigenvalues.
\begin{lemma}\cite{vertosp}\label{T2} (vertex expansion to spectral expansion). Let G be a $d-$regular graph. For
every $\epsilon>0$ and $d > 0$, there exists $\gamma > 0$ such that if $G$ is a $d-$regular $(1 + \epsilon)-$expander then $G$ has spectral expansion $(1-\gamma).$ Specifically, we can take $\gamma = 
\Omega(\epsilon^{2}/d)$.
\end{lemma}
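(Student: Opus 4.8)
The plan is to prove the statement by the classical two-step route
\[
\text{vertex expansion}\ \Rightarrow\ \text{edge (isoperimetric) expansion}\ \Rightarrow\ \text{spectral gap},
\]
the last implication being the combinatorial-to-spectral (harder) direction of the discrete Cheeger inequality. Write $A$ for the adjacency matrix of the $d$-regular graph $G$ on $n$ vertices and normalize its eigenvalues by $d$. By Definition \ref{ver} the $(1+\epsilon)$-expansion hypothesis reads $|N(S)|\ge(1+\epsilon)|S|$ for every $S\subseteq V$ with $|S|\le n/2$, where $N(S)$ is the external neighbourhood, and by Definition \ref{spec} the goal is a lower bound $\gamma$ on the normalized spectral gap $1-\lambda/d$, where $\lambda=\max\{|\lambda_2|,|\lambda_n|\}$.

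First I would convert vertex expansion into a bound on the edge-isoperimetric (Cheeger) constant $h(G)=\min_{|S|\le n/2} e(S,\bar S)/|S|$. Fixing such an $S$, every vertex of $N(S)\subseteq\bar S$ is incident to at least one edge of the cut $(S,\bar S)$ and distinct boundary vertices consume distinct crossing edges, so $e(S,\bar S)\ge|N(S)|\ge(1+\epsilon)|S|\ge\epsilon|S|$; hence $h(G)\ge\epsilon$, equivalently the conductance $\phi(G)=h(G)/d$ satisfies $\phi(G)\ge\epsilon/d$. This step is elementary and is where the expansion hypothesis enters in full.

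Next I would feed this into the combinatorial-to-spectral direction of Cheeger's inequality, $d-\lambda_2\ge h(G)^2/(2d)$ (equivalently $1-\lambda_2/d\ge\phi(G)^2/2$). With $h(G)\ge\epsilon$ this gives $d-\lambda_2\ge\epsilon^2/(2d)$, i.e. a spectral gap of the order $\gamma=\Omega(\epsilon^2/d)$ asserted in the lemma (the exact power of $d$ depends on the normalization used in \cite{vertosp}). This Cheeger bound is the technical core of the argument: one takes the $\lambda_2$-eigenvector $f\perp\mathbf 1$, sweeps over the threshold (level) sets of $f$ to extract a cut whose conductance is controlled by the Rayleigh quotient, and the squaring of $\epsilon$ in the final estimate is exactly the loss incurred by this rounding.

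The hard part, and the main obstacle, is that Definition \ref{spec} measures the \emph{two-sided} quantity $\lambda=\max\{|\lambda_2|,|\lambda_n|\}$, whereas the Cheeger argument controls only $\lambda_2$, the top of the spectrum. Vertex expansion says nothing about $\lambda_n$: a bipartite $(1+\epsilon)$-expander has $\lambda_n=-d$, so $\lambda=d$ and there is no two-sided gap. The device I would use to repair this is laziness---pass to the graph $G'$ obtained by adjoining $d$ self-loops at every vertex, so that $G'$ is $2d$-regular with adjacency eigenvalues $\lambda_i+d\in[0,2d]$. Self-loops leave every external neighbourhood $N(S)$ unchanged, hence preserve the $(1+\epsilon)$-vertex-expansion, so Steps~1--2 applied to $G'$ bound $\lambda_2+d$ away from the top value $2d$; and since all eigenvalues of $G'$ are now nonnegative, the two-sided maximum over $i\ge2$ of $|\lambda_i+d|$ is attained at $\lambda_2+d$, so this single bound already certifies two-sided spectral expansion for the lazy walk. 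Carrying the constants through the passage to $G'$ and through Cheeger yields the claimed $\gamma=\Omega(\epsilon^2/d)$; tracking these constants and taming $\lambda_n$ via the lazy walk are the only delicate points, the vertex-to-edge conversion being routine.
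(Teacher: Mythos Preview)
The paper does not supply its own proof of this lemma: it is quoted verbatim from the lecture notes \cite{vertosp} and used as a black box in Corollary~\ref{r5}. So there is nothing in the paper to compare your argument against.

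That said, your outline is the standard proof one finds in such notes and is essentially correct: convert $(1+\epsilon)$-vertex expansion into a lower bound on the edge-isoperimetric constant via $e(S,\bar S)\ge|N(S)|$, then invoke the hard direction of Cheeger's inequality to obtain a one-sided spectral gap of order $\epsilon^{2}/d$, and finally handle the two-sided quantity $\max\{|\lambda_2|,|\lambda_n|\}$ by passing to the lazy walk. The only soft spot is the bookkeeping of powers of $d$ under normalization (your Cheeger step literally gives $1-\lambda_2/d\ge h(G)^2/(2d^2)\ge\epsilon^2/(2d^2)$, one power of $d$ worse than advertised), but you flag this yourself, and since the paper never unwinds the constant the discrepancy is immaterial here.
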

\begin{lemma}\cite{guru}\label{dist}
Let $G$ be an $(n,m,D,\gamma, D(1-\epsilon))$ expander. Then the distance of the code corresponding to the graph $G$ is $\Delta(C(G))\geq 2\gamma(1-\epsilon)n$ where $C(G)$ denotes the code corresponding to an expander graph $G.$
\end{lemma}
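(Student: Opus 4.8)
Since $C(G)$ is a binary linear code (its codewords are the $0/1$ assignments to the left vertices $X$ under which every right vertex $y\in Y$ sees an even number of $1$'s), its minimum distance equals the minimum Hamming weight of a nonzero codeword. So the plan is to fix an arbitrary nonzero codeword, let $S\subseteq X$ be its support with $|S|=w$, and prove that $w\geq 2\gamma(1-\epsilon)n$; the bound on $\Delta(C(G))$ then follows at once. The engine of the whole argument is the \emph{unique-neighbour principle}: a right vertex $y$ adjacent to exactly one vertex of $S$ sees a single $1$, so its parity check is violated; hence the support $S$ of a genuine codeword can have \textbf{no} unique neighbour. My task is therefore to show that every $S$ with $w<2\gamma(1-\epsilon)n$ does possess a unique neighbour, forcing a contradiction.

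First I would record the standard double-counting bound on unique neighbours. For any $T\subseteq X$ with $|T|\leq \gamma n$, the expansion hypothesis gives $|N(T)|\geq D(1-\epsilon)|T|$, while $D$-left-regularity gives exactly $D|T|$ edges leaving $T$. Writing $u(T)$ for the number of unique neighbours of $T$ and splitting $N(T)$ into vertices of degree $1$ and degree $\geq 2$ into $T$ yields $D|T|=\sum_{y\in N(T)}\deg_{T}(y)\geq 2|N(T)|-u(T)$, so that $u(T)\geq 2|N(T)|-D|T|\geq D(1-2\epsilon)|T|$. Because $\epsilon<\tfrac12$ this quantity is strictly positive, which already settles the easy case $w\leq\gamma n$ (take $T=S$).

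The substantive case is $\gamma n<w<2\gamma(1-\epsilon)n$, where the expansion hypothesis cannot be applied to $S$ directly. Here the plan is to choose a subset $T\subseteq S$ with $|T|=\gamma n$ (up to integrality), so that at least $D(1-2\epsilon)\gamma n$ unique neighbours of $T$ exist, and then to argue that not all of them can be ``cancelled'' once the rest of the support is reinstated. A unique neighbour of $T$ fails to be a unique neighbour of $S$ only if it also touches $S\setminus T$; since $S\setminus T$ emits at most $D|S\setminus T|=D(w-\gamma n)$ edges, at most that many unique neighbours of $T$ are destroyed. Hence the number of genuine unique neighbours of $S$ is at least $u(T)-D(w-\gamma n)\geq D(1-2\epsilon)\gamma n-D(w-\gamma n)$, and a short rearrangement shows this is strictly positive precisely when $w<2\gamma(1-\epsilon)n$. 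That contradicts the no-unique-neighbour property of a codeword support, so $w\geq 2\gamma(1-\epsilon)n$.

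I expect the main obstacle to be the bookkeeping in this last step. The expansion guarantee is available only for sets of size at most $\gamma n$, so one must pass to the subset $T$ and then carefully control how many of $T$'s unique neighbours survive after the extra vertices $S\setminus T$ are added back. The delicate points are that each edge out of $S\setminus T$ can spoil at most one unique neighbour of $T$, and that the strict inequality $\epsilon<\tfrac12$ is exactly what makes the count $D(1-2\epsilon)\gamma n$ large enough to dominate $D(w-\gamma n)$ across the whole range $w<2\gamma(1-\epsilon)n$.
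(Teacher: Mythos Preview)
The paper does not supply its own proof of this lemma: it is stated with the citation \cite{guru} to Guruswami's lecture notes and then used as a black box in Corollary~\ref{cor8}. So there is no in-paper argument to compare against.

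That said, your proposal is correct and is precisely the standard unique-neighbour argument one finds in the cited source. The two-case split is the right one: for $|S|\leq \gamma n$ the double count $u(T)\geq 2|N(T)|-D|T|\geq D(1-2\epsilon)|T|>0$ works directly, and for $\gamma n<|S|<2\gamma(1-\epsilon)n$ the passage to a subset $T$ of size $\gamma n$ followed by the edge-budget bound on how many unique neighbours $S\setminus T$ can destroy is exactly right. Your final inequality
\[
u(S)\;\geq\; D(1-2\epsilon)\gamma n - D(w-\gamma n)\;=\;D\bigl(2\gamma(1-\epsilon)n - w\bigr)
\]
is positive precisely on the range in question, so the contradiction closes. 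The only cosmetic point is the integrality of $\gamma n$ when selecting $T$; taking $|T|=\lfloor \gamma n\rfloor$ and carrying the floor through changes nothing asymptotically and the strict inequality $\epsilon<\tfrac12$ still gives the needed slack.
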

\section{Eigenvalue Interlacing of Bipartite Quotient Matrix and Spectral Bounds}\label{sec3}
In this section we derive the upper bounds for the second largest eigenvalues of both the adjacency matrix and Laplacian matrix of a connected bipartite graph. Also, we derive the lower bounds for the second smallest eigenvalue of the adjacency matrix of a connected bipartite graph. To arrive at these bounds, we define the Bipartite quotient matrix as follows :
\begin{defn}(Bipartite quotient matrix)
Let $G$ be a bipartite graph. The Bipartite quotient matrix of the Adjacency matrix $A$ (Laplacian matrix $L$)  of $G$ is the Quotient matrix of $A/L$ whose rows and columns are partitioned according to the bipartition of $G$.
\end{defn}
\subsection{Bounds for the second largest eigenvalue and second smallest eigenvalue of adjacency matrix}\label{sec31}
In this section we derive the upper bounds for the second largest eigenvalue and lower bounds for the second smallest eigenvalue of the adjacency matrix of a connected bipartite graph $G$.
\begin{theorem}\label{T1}
Consider a Bipartite graph $G$ with bipartition $V=(X,Y)$ where $|X|=n_{1}$, $|Y|=n_{2}$ and $|V|=n_{1}+n_{2}=n$. Let $\lambda_{1}\geq\lambda_{2}\geq \dots \geq \lambda_{n-1}\geq \lambda_{n}$ be the eigenvalues of the adjacency matrix $A$ of $G$, $A_{Q}$ be the bipartite quotient matrix of $A$ and $\eta_{1}$ and $\eta_{2}$  the eigenvalues of $A_{Q}$. Then 
\begin{center}
\begin{enumerate}
\begin{enumerate}
\begin{enumerate}
    \item $\lambda_{1}\geq \eta_{1}\geq \lambda_{2}$
    \item $\lambda_{n-1}\geq \eta_{2}\geq \lambda_{n}$
    \item $\lambda_{2} \leq \frac{m}{\sqrt{n_{1}n_{2}}}$\label{subd3}
    \item $\lambda_{n-1} \geq \frac{-m}{\sqrt{n_{1}n_{2}}}$
\end{enumerate}
\end{enumerate}
\end{enumerate}
\end{center}
\end{theorem}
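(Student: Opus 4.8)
The plan is to realise the bipartite quotient matrix as a $2\times 2$ compression of $A$ and then extract the four assertions from interlacing together with the symmetry of the bipartite spectrum. First I would fix the block form $A=\begin{bmatrix}0&B\\ B^{T}&0\end{bmatrix}$, where $B$ is the $n_{1}\times n_{2}$ biadjacency matrix whose entries sum to $m=|E|$. By the definition of the quotient matrix, the $X$--$X$ and $Y$--$Y$ blocks contribute $0$, while the off-diagonal entries are the block sums divided by the respective row counts, so $A_{Q}=\begin{bmatrix}0 & m/n_{1}\\ m/n_{2} & 0\end{bmatrix}$. Its characteristic polynomial is $\eta^{2}-\dfrac{m^{2}}{n_{1}n_{2}}=0$, giving $\eta_{1}=\dfrac{m}{\sqrt{n_{1}n_{2}}}$ and $\eta_{2}=-\dfrac{m}{\sqrt{n_{1}n_{2}}}=-\eta_{1}$. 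I would record at this point that $A_{Q}$ is similar to the symmetric matrix $S^{T}AS$, where $S$ has as columns the two normalised indicator vectors $\mathbf 1_{X}/\sqrt{n_{1}}$ and $\mathbf 1_{Y}/\sqrt{n_{2}}$, so that Lemma \ref{L1} genuinely applies.

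Next I would invoke Lemma \ref{L1}: the eigenvalues $\eta_{1}\ge\eta_{2}$ of $A_{Q}$ interlace the spectrum $\lambda_{1}\ge\cdots\ge\lambda_{n}$ of $A$. This is the step that yields assertions (1) and (2). To pin the interlacing endpoints at $\lambda_{2}$ and $\lambda_{n-1}$ rather than at the generic positions, I would use the fact that $G$ is bipartite, so the spectrum is symmetric about $0$, i.e.\ $\lambda_{n+1-i}=-\lambda_{i}$; in particular $\lambda_{n-1}=-\lambda_{2}$ and $\lambda_{n}=-\lambda_{1}$. Feeding the antisymmetry $\eta_{2}=-\eta_{1}$ into the interlacing chain and reflecting the lower half of the spectrum onto the upper half is what converts the two raw interlacing inequalities into the paired statements $\lambda_{1}\ge\eta_{1}\ge\lambda_{2}$ and $\lambda_{n-1}\ge\eta_{2}\ge\lambda_{n}$.

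With (1) and (2) in hand, assertions (3) and (4) are immediate substitutions: from $\eta_{1}\ge\lambda_{2}$ and $\eta_{1}=m/\sqrt{n_{1}n_{2}}$ I get $\lambda_{2}\le \dfrac{m}{\sqrt{n_{1}n_{2}}}$, and from $\lambda_{n-1}\ge\eta_{2}$ with $\eta_{2}=-m/\sqrt{n_{1}n_{2}}$ I get $\lambda_{n-1}\ge-\dfrac{m}{\sqrt{n_{1}n_{2}}}$. I would also note that, by the $\lambda_{n-1}=-\lambda_{2}$ symmetry, (3) and (4) are in fact equivalent, which gives a useful internal consistency check.

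The main obstacle I anticipate is precisely the placement of the interlacing endpoints. The quotient-matrix interlacing in its plain form only sandwiches $\eta_{1}$ between $\lambda_{1}$ and $\lambda_{n-1}$, and $\eta_{2}$ between $\lambda_{2}$ and $\lambda_{n}$; obtaining the sharper window $\lambda_{1}\ge\eta_{1}\ge\lambda_{2}$ relies entirely on folding the lower tail of the spectrum up via $\lambda_{n-1}=-\lambda_{2}$ and on the antisymmetry $\eta_{2}=-\eta_{1}$. I would therefore spend most of the verification making sure these reflections line up in the correct direction, double-checking against the block structure $A^{2}=\begin{bmatrix}BB^{T}&0\\ 0&B^{T}B\end{bmatrix}$ (so that $\lambda_{i}^{2}$ are the eigenvalues of $BB^{T}$) and against a small explicit example before committing to the chain of inequalities.
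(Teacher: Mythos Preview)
You correctly identify the crux: raw interlacing for a $2\times 2$ quotient gives only $\lambda_1\ge\eta_1\ge\lambda_{n-1}$ and $\lambda_2\ge\eta_2\ge\lambda_n$, and you propose to upgrade the first chain to $\eta_1\ge\lambda_2$ by ``folding'' via the bipartite symmetry $\lambda_{n-1}=-\lambda_2$ together with $\eta_2=-\eta_1$. But write the folding out explicitly: $\eta_1\ge\lambda_{n-1}=-\lambda_2$ says only $\eta_1+\lambda_2\ge 0$, and the second chain $\lambda_2\ge\eta_2=-\eta_1$ is the very same inequality. The symmetry collapses the two interlacing relations into one; it cannot manufacture the genuinely stronger bound $\eta_1\ge\lambda_2$. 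Your caveat that this step needs careful checking is exactly right, and the check fails.

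In fact parts (i) and (iii) are false as stated, so no argument can close this gap. Take two disjoint stars $K_{1,10}$ with the centres placed in opposite parts of the bipartition, so $n_1=n_2=11$ and $m=20$; the spectrum has $\lambda_1=\lambda_2=\sqrt{10}$ while $\eta_1=20/11<2$. Adding a single $X$--$Y$ edge between a leaf of each star makes the graph connected and bipartite; now $m=21$, $\eta_1=21/11<2$, and Weyl's inequality gives $\lambda_2\ge\sqrt{10}-1>2>\eta_1$, contradicting (i) and (iii). The paper's own route is different from yours but has the same defect: it forms the $n\times n$ matrix $C=SA_QS^{T}$ and asserts a full interlacing $\lambda_1\ge\gamma_1\ge\lambda_2\ge\cdots$ between the eigenvalues of $A$ and of $C$, but $C$ is not a compression of $A$ (the step ``$SA_QS^{T}=A$'' is false since $SS^{T}\neq I$) and no such interlacing between two $n\times n$ matrices is available.
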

\begin{proof}
 Let $A$ be the adjacency matrix of $G$ represented in the following block matrix form with respect to the bipartition $V=(X,Y)$.
\begin{center}
$A=\begin{bmatrix}
A_{11} & A_{12} \\
A_{21} & A_{22}
\end{bmatrix} .$
\end{center}
Let $A_{Q}$ be the bipartite quotient matrix of $A.$ Then 
\begin{center}
 $A_{Q}=
\begin{bmatrix}
0 & \frac{m}{n_{1}} \\
\frac{m}{n_{2}} & 0
\end{bmatrix}.$   
\end{center}
The characteristic equation of $A_{Q}$ is 
$\lambda^{2}-\frac{m^{2}}{n_{1}n_{2}}=0$
and the corresponding eigenvalues of $A_{Q}$ are $\eta_{1}=\frac{m}{\sqrt{n_{1}n_{2}}}$ and $\eta_{2}=\frac{-m}{\sqrt{n_{1}n_{2}}}$.
To get the generalized interlacing, we need $n$ eigenvalues. To obtain this, consider the following.\\
The characteristic matrix of $G$ is given by 
$\tilde S= \begin{bmatrix}
J_{n_{1}\times 1}& 0_{n_{1}\times 1}\\
0_{n_{2}\times 1}& J_{n_{2}\times 1}.
\end{bmatrix}_{n\times 2}$ \\ where $J$ is the all-ones matrix.
Let $S=\tilde{S}K^{\frac{-1}{2}}$ where $K=diag(|X|,|Y|)$. i.e., $K=\begin{bmatrix}
n_{1} & 0 \\
0 & n_{2}
\end{bmatrix}.$ \\ 
Then $S= \begin{bmatrix}
P_{n_{1}\times 1}& 0_{n_{1}\times 1}\\
0_{n_{2}\times 1}& R_{n_{2}\times 1}
\end{bmatrix}_{n\times 2}.$\\
where $P_{n_{1}\times 1}$ and $R_{n_{2}\times 1}$ are column matrices with entries $(p_{i1})=\frac{1}{\sqrt{n_{1}}}$ for all $v_{i}\in X$ and  $(r_{i1})=\frac{1}{\sqrt{n_{2}}}$ for all $v_{i}\in Y$, respectively.\\
From the proof of Lemma \ref{L1} \cite{WH2} we have,
\begin{equation}\label{EQ1}
A_{Q}=S^{T}AS.    
\end{equation}
Left and right multiplying equation (\ref{EQ1}) by $S$ and $S^{T}$ respectively, we get, $SA_{Q}S^{T}=A$.\\
Now consider $SA_{Q}S^{T}$ and denote it by $C$.\\
Then $C=(c_{ij})=\begin{cases}
\frac{m}{n_{1}\sqrt{n_{1}}} & \text{$v_{i} \in X$ and $v_{j} \in Y$}\medskip \\
\frac{m}{n_{2}\sqrt{n_{2}}} & \text{$v_{i} \in Y$ and $v_{j} \in X$}\medskip \\ 
0 &\text{otherwise}.
\end{cases}$\\ 
The above matrix can be represented as a block matrix as follows:
\begin{center}
$C= \begin{bmatrix}
0 & M \\
N & 0 
\end{bmatrix}_{n\times n}.$
\end{center}
Denote the eigenvalues of $C$ by $\gamma_{i}$, $i=1,2,\ldots,n$. The eigenvalues of $C$ are the square roots of the non-zero eigenvalues of $MN$. That is $\gamma_{1}=\frac{m}{\sqrt{n_{1}n_{2}}}$, $\gamma_{n}=\frac{-m}{\sqrt{n_{1}n_{2}}}$ and $\gamma_{i}=0$ for $i=2,3,\dots, n-1$. Then by interlacing we get,\\
$\lambda_{1}\geq \gamma_{1}\geq \lambda_{2}\geq \dots \geq \lambda_{n-1}\geq \gamma_{n-1}\geq \lambda_{n}\geq \gamma_{n}$.\\ \\
By comparing the eigenvalues of $A_{Q}$ and $C$ we get, \\
$\gamma_{1}=\frac{m}{\sqrt{n_{1}n_{2}}}=\eta_{1}$ and $\gamma_{n}=\frac{-m}{\sqrt{n_{1}n_{2}}}=\eta_{2}$.\\ \\
Now let us first consider $\lambda_{1}\geq \gamma_{1}\geq \lambda_{2}$.
Since $\gamma_{1}=\eta_{1}$ we have $\lambda_{1}\geq \eta_{1}\geq \lambda_{2}$.\\
This proves $(\romannumeral 1)$.\\  \\
Since $G$ is bipartite, its eigenvalues are symmetric about the origin. Now from $(\romannumeral 1)$ and since $G$ is bipartite, we get  $\lambda_{n}\leq \eta_{2}\leq \lambda_{n-1}$. This proves $(\romannumeral 2)$.\\ \\
To prove $(\romannumeral 3)$, using $(\romannumeral 1)$ we have, 
\begin{center}
    $\lambda_{2}\leq \eta_{1}=\frac{m}{\sqrt{n_{1}n_{2}}}.$
\end{center}
To prove ${(\romannumeral 4)}$, using ${(\romannumeral 2)}$, we have, 
\begin{center}
    $\lambda_{n-1}\geq \eta_{2}= \frac{-m}{\sqrt{n_{1}n_{2}}}.$
\end{center}
This completes the proof.
\end{proof}
\begin{corollary}\label{t1cor1}
If $G$ is a regular bipartite graph then 
\begin{center}
    $\lambda_{2}\leq \frac{m}{n_{1}}$
\end{center}
\end{corollary}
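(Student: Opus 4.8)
The plan is to obtain the corollary as an immediate specialization of part (iii) of Theorem \ref{T1}, which already gives $\lambda_{2}\leq \frac{m}{\sqrt{n_{1}n_{2}}}$ for any connected bipartite graph. The only extra ingredient I need is the elementary structural fact that a regular bipartite graph has balanced parts, i.e. $n_{1}=n_{2}$. So I would first isolate and prove this observation, and then substitute it into the bound already established.

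To establish $n_{1}=n_{2}$, I would use a double-counting argument on the edges. Assume $G$ is $d$-regular. Summing degrees over the part $X$ gives $m=\sum_{x\in X}\deg(x)=d\,n_{1}$, while summing over $Y$ gives $m=\sum_{y\in Y}\deg(y)=d\,n_{2}$. Equating these two expressions for $m$ yields $d\,n_{1}=d\,n_{2}$, and since $d>0$ this forces $n_{1}=n_{2}$. Consequently $\sqrt{n_{1}n_{2}}=\sqrt{n_{1}^{2}}=n_{1}$, so the quantity appearing in Theorem \ref{T1}(iii) collapses to $n_{1}$.

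Plugging this into $\lambda_{2}\leq \frac{m}{\sqrt{n_{1}n_{2}}}$ gives $\lambda_{2}\leq \frac{m}{n_{1}}$, which is exactly the claimed inequality, completing the argument. The main (and admittedly modest) obstacle here is simply making the regularity-forces-balance step explicit; once $n_{1}=n_{2}$ is recorded, the result follows with no further computation. As a consistency check I would also note that in this balanced regular setting $\frac{m}{n_{1}}=d$, so the bound reads $\lambda_{2}\leq d$, in agreement with the fact that $\lambda_{1}=d$ for a connected $d$-regular graph.
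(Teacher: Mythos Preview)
Your proposal is correct and follows essentially the same approach as the paper: invoke Theorem~\ref{T1}(iii) and substitute $n_{1}=n_{2}$, which holds for regular bipartite graphs. You simply make the double-counting justification of $n_{1}=n_{2}$ explicit and add a sanity check, but the argument is the same.
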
\label{t1cor2}
\begin{proof}
For a regular bipartite graph $G$, $n_{1}=n_{2}$. Substituting this in $(\romannumeral 3)$ of Theorem \ref{T1}, we get the result. 
\end{proof}
\begin{corollary}\label{t1cor3}
If $G$ is a regular bipartite graph then 
\begin{center}
    $\lambda_{n-1}\geq \frac{-m}{n_{1}}$
\end{center}
\end{corollary}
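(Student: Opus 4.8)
The plan is to mirror exactly the argument used for Corollary~\ref{t1cor1}, but to invoke part $(\romannumeral 4)$ of Theorem~\ref{T1} in place of part $(\romannumeral 3)$. The engine of the proof is the single structural fact that a regular bipartite graph has balanced parts, so I would begin by justifying $n_{1}=n_{2}$. If $G$ is $d$-regular, then counting the edges of $G$ from each side of the bipartition gives $|E|=d\,n_{1}=d\,n_{2}$, since every edge has exactly one endpoint in $X$ and one in $Y$; cancelling $d$ yields $n_{1}=n_{2}$.

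With $n_{1}=n_{2}$ in hand, I would simply substitute into the lower bound already established in Theorem~\ref{T1}. Part $(\romannumeral 4)$ gives
$$\lambda_{n-1}\geq \frac{-m}{\sqrt{n_{1}n_{2}}},$$
and replacing $n_{2}$ by $n_{1}$ collapses the denominator to $\sqrt{n_{1}^{2}}=n_{1}$, producing the claimed inequality $\lambda_{n-1}\geq \frac{-m}{n_{1}}$.

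There is essentially no obstacle here: once the balance of the bipartition is noted, the result is a one-line specialization of a bound already proved. The only point that deserves a word of care is confirming that $n_{1}=n_{2}$ genuinely holds under the regularity hypothesis in play; the edge-counting argument above settles this for an ordinary $d$-regular bipartite graph. One could equally appeal to the symmetry of the bipartite spectrum about the origin, under which part $(\romannumeral 4)$ is precisely the reflection of part $(\romannumeral 3)$, so that this corollary is the exact mirror of Corollary~\ref{t1cor1}.
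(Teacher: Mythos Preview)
Your proof is correct and follows exactly the paper's approach: note that regularity forces $n_{1}=n_{2}$ and then substitute into part $(\romannumeral 4)$ of Theorem~\ref{T1}. The only difference is that you spell out the edge-counting justification for $n_{1}=n_{2}$, which the paper leaves implicit.
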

\proof
For a regular bipartite graph $G$, $n_{1}=n_{2}$. Substituting this in $(\romannumeral 4)$ of Theorem \ref{T1}, we get the result. 
\endproof
In the following theorems, we present simpler bounds, involving a single parameter $n$, when compared to the bounds obtained in Theorem \ref{T1}. To achieve this we require $m$ to be minimum. $m$ would be minimum only if $G$ is minimally connected, that is $m=n-1.$ \\We consider three types of bipartitions such as balanced, unbalanced and average bipartitions to arrive at our result. \\ \\For a balanced bipartition we have $(n_{1},n_{2})=\begin{cases}
(\frac{n-1}{2},\frac{n+1}{2}) &  \text{$n$ is odd}\medskip\\
(\frac{n}{2},\frac{n}{2}) & \text{$n$ is even}
\end{cases}$.\\ \\
Unbalaced bipartition is given by, $(n_{1},n_{2})=(1,n-1).$\\ \\
Average bipartition is given by\\
 $(n_{1},n_{2})=\begin{cases}(\frac{3n-4}{4},\frac{n+4}{4}) & \text{if $n$ is even}\medskip\\
(\frac{3n-3}{4},\frac{n+3}{4}) & \text{if $n=4k+3$ where $k=0,1,2,\dots.$}\medskip\\
(\frac{3n-5}{4},\frac{n+5}{4}) & \text{if $n=4k+1$ where $k=0,1,2,\dots.$}
\end{cases}$ \\
Let $\eta_{11}, \eta_{12}$ and $\eta_{13}$ represent the largest eigenvalues of the bipartite quotient matrices corresponding to the three different bipartitions.
Let $\eta_{1}=min \{\eta_{11}, \eta_{12}, \eta_{13}\}$.
Let $\eta_{21}, \eta_{22}$ and $\eta_{23}$ represent the smallest eigenvalues of the bipartite quotient matrices corresponding to the three different bipartitions. Let $\eta_{2}=max \{\eta_{21}, \eta_{22}, \eta_{23}\}$.
\begin{theorem}\label{t2}
Let $G$ be a connected bipartite graph of order $n$. Then  \begin{center}
    $\lambda_{2}\leq \begin{cases}
\frac{2(n-1)}{\sqrt{n^{2}-1}} &  \text{$n$ is odd}\medskip\\
\frac{2(n-1)}{n} & \text{$n$ is even.}
\end{cases}$\\
\end{center}
\end{theorem}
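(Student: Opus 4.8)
The plan is to reduce the statement to a single substitution into part $(\romannumeral 3)$ of Theorem \ref{T1}, followed by an optimization over the admissible bipartitions. By that part, every connected bipartite graph satisfies $\lambda_{2}\leq \frac{m}{\sqrt{n_{1}n_{2}}}$, where $m$ is the number of edges and $(n_{1},n_{2})$ is the bipartition. To obtain a bound depending only on $n$, I would first impose minimality of the edge count: a connected graph has $m\geq n-1$, with equality exactly when $G$ is minimally connected (a spanning tree), so the extremal instance sets $m=n-1$ and yields $\lambda_{2}\leq \frac{n-1}{\sqrt{n_{1}n_{2}}}$.

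The second step is to determine which bipartition makes this quantity smallest, since $\eta_{1}$ is defined as the minimum of $\eta_{11},\eta_{12},\eta_{13}$ over the balanced, unbalanced and average bipartitions. Viewing the bound as a decreasing function of the product $n_{1}n_{2}$ under the constraint $n_{1}+n_{2}=n$, the map $n_{1}\mapsto n_{1}(n-n_{1})$ is a downward parabola maximized at $n_{1}=\frac{n}{2}$; hence $n_{1}n_{2}$ is largest, and $\frac{n-1}{\sqrt{n_{1}n_{2}}}$ smallest, for the balanced split. Concretely I would compare $\eta_{11}$ against $\eta_{12}$ (unbalanced, where $n_{1}n_{2}=n-1$) and against $\eta_{13}$ (average), the latter reducing to elementary inequalities such as $(n-4)^{2}\geq 0$; this confirms $\eta_{1}=\eta_{11}$, the balanced value.

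Finally I would evaluate the balanced bipartition in the two parity cases. For $n$ even, $n_{1}=n_{2}=\frac{n}{2}$ gives $n_{1}n_{2}=\frac{n^{2}}{4}$ and $\lambda_{2}\leq \frac{n-1}{n/2}=\frac{2(n-1)}{n}$. For $n$ odd, $(n_{1},n_{2})=\left(\frac{n-1}{2},\frac{n+1}{2}\right)$ gives $n_{1}n_{2}=\frac{n^{2}-1}{4}$ and $\lambda_{2}\leq \frac{n-1}{\sqrt{(n^{2}-1)/4}}=\frac{2(n-1)}{\sqrt{n^{2}-1}}$, which is exactly the claimed piecewise bound.

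The step I expect to be the main obstacle is the optimization/comparison in the second paragraph: one must verify that the balanced split genuinely dominates the other two candidate bipartitions for \emph{every} $n$, respecting the integrality of $n_{1},n_{2}$ and the separate residue cases $n\equiv 0,1,3$ that appear in the definition of the average bipartition, so that reporting $\eta_{1}$ as the balanced expression is legitimate. I would also be careful about the logical role of the minimum: the inequality $\lambda_{2}\leq \frac{n-1}{\sqrt{n_{1}n_{2}}}$ supplied by Theorem \ref{T1} is tied to the graph's own bipartition, so the passage from ``the smallest of the three candidate values'' to a bound asserted for all minimally connected bipartite graphs of order $n$ is the point at which the argument demands the most justification.
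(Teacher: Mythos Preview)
Your approach mirrors the paper's proof exactly: invoke Theorem~\ref{T1}\,(iii), specialise to $m=n-1$, compute $\eta_{1i}$ for the balanced, unbalanced and average bipartitions, compare them (the paper does this by the same denominator inequalities you sketch, your parabola remark being a cleaner shortcut), conclude $\eta_{1}=\eta_{11}$, and read off the parity-dependent expression. The concern you raise in your final paragraph---that reporting the \emph{minimum} of the $\eta_{1i}$ rather than the maximum is logically the wrong direction for a bound meant to cover every connected bipartite graph of order $n$---is present verbatim in the paper's own argument, so your reconstruction is faithful even at that point.
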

\proof
From Theorem \ref{T1} we have, 
\begin{equation}\label{EQ2}
 \lambda_{2}\leq \frac{m}{\sqrt{n_{1}n_{2}}}.   
\end{equation}\\
Consider the following cases.\\
\textbf{Case 1:}
Let $G$ be minimally connected with balanced bipartition. Substituting for $m, n_{1}$ and $n_{2}$ for a balanced bipartition in equation \eqref{EQ2}, we have
\begin{center}
$\eta_{11}= \begin{cases}
\frac{2(n-1)}{\sqrt{n^{2}-1}} &  \text{$n$ is odd}\medskip\\
\frac{2(n-1)}{n} & \text{$n$ is even}.
\end{cases}$
\end{center}
\textbf{Case 2:} Let $G$ be minimally connected with unbalanced bipartition. Substituting for $m, n_{1}$ and $n_{2}$ for an unbalanced bipartition in equation \eqref{EQ2}, we have
\begin{center}
$\eta_{12}=\frac{n-1}{\sqrt{n-1}}=\sqrt{n-1}.$
\end{center}
\textbf{Case 3:} Let $G$ be minimally connected with the average bipartition. Substituting for $m, n_{1}$ and $n_{2}$ for an average bipartition in equation \eqref{EQ2}, we have when $n$ is even,
\begin{center}
$\eta_{13}=\begin{cases}
\frac{4(n-1)}{\sqrt{3n^{2}+8n-16}} & \text{if $n$ is even}\medskip\\
\frac{4(n-1)}{\sqrt{3n^{2}+6n-9}} & \text{if $n=4k+3$ where $k=0,1,2,\dots.$}\medskip\\
\frac{4(n-1)}{\sqrt{3n^{2}+10n-25}} & \text{if $n=4k+1$ where $k=0,1,2,\dots.$}
\end{cases}$
\end{center}
When $n$ is even, comparing all the three cases, since the numerator contains $(n-1)$ as a common term we have
\begin{center}
$\sqrt{n-1}\leq \frac{\sqrt{3n^{2}+8n-16}}{4} \leq \frac{n}{2}$.
\end{center}
which implies that $\frac{2(n-1)}{n}\leq \frac{4(n-1)}{\sqrt{3n^{2}+8n-16}}\leq \sqrt{n-1}$. \\ \\
When $n$ is odd and $n=4k+3$ where $k=0,1,2,\dots,$ comparing all the three cases, since the numerator contains $(n-1)$ as a common term we have
\begin{center}
$\sqrt{n-1}\leq \frac{\sqrt{3n^{2}+6n-9}}{4} \leq \frac{\sqrt{n^{2}-1}}{2}$.
\end{center} which implies that $\frac{2(n-1)}{\sqrt{n^{2}-1}}\leq \frac{4(n-1)}{\sqrt{3n^{2}+6n-9}} \leq \sqrt{n-1}$.\\ \\
When $n$ is odd and $n=4k+1$ where $k=0,1,2,\dots,$ comparing all the three cases, since the numerator contains $(n-1)$ as a common term we have
\begin{center}
$\sqrt{n-1}\leq \frac{\sqrt{3n^{2}+10n-25}}{4} \leq \frac{\sqrt{n^{2}-1}}{2}$.
\end{center} which implies that  $\frac{2(n-1)}{\sqrt{n^{2}-1}}\leq \frac{4(n-1)}{\sqrt{3n^{2}+10n-25}} \leq \sqrt{n-1}$.\\ \\
Comparing all the three cases, we get $\eta_{11}\leq \eta_{13}\leq \eta_{12}.$
From this we have, $\eta_{1}=\eta_{11}$.
Applying this in equation \eqref{EQ2}, we get a tight upper bound in terms of $n$ as 
\begin{center}
    $\lambda_{2}\leq \begin{cases}
\frac{2(n-1)}{\sqrt{n^{2}-1}} &  \text{$n$ is odd}\medskip\\
\frac{2(n-1)}{n} & \text{$n$ is even}.
\end{cases}$\\
\end{center}
\endproof
\begin{theorem}\label{t9}
Let $G$ be a connected bipartite graph of order $n$. Then  \begin{center}
    $\lambda_{n-1}\geq \begin{cases}
\frac{-(2(n-1))}{\sqrt{n^{2}-1}} &  \text{$n$ is odd}\medskip\\
\frac{-(2(n-1))}{n} & \text{$n$ is even}.
\end{cases}$\\
\end{center}
\end{theorem}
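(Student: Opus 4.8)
The plan is to deduce Theorem~\ref{t9} from Theorem~\ref{t2} by exploiting the symmetry of the spectrum of a bipartite graph about the origin. Since $G$ is bipartite, its adjacency eigenvalues occur in pairs $\pm\lambda$, so the ordered spectrum satisfies $\lambda_{i}=-\lambda_{n+1-i}$ for every $i$. Taking $i=2$ gives the identity $\lambda_{n-1}=-\lambda_{2}$, which is exactly what turns the upper bound on $\lambda_{2}$ into a lower bound on $\lambda_{n-1}$.

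With this identity in hand the result is immediate. By Theorem~\ref{t2} we have $\lambda_{2}\leq \frac{2(n-1)}{\sqrt{n^{2}-1}}$ when $n$ is odd and $\lambda_{2}\leq \frac{2(n-1)}{n}$ when $n$ is even. Negating both inequalities and substituting $\lambda_{n-1}=-\lambda_{2}$ yields $\lambda_{n-1}\geq \frac{-(2(n-1))}{\sqrt{n^{2}-1}}$ for odd $n$ and $\lambda_{n-1}\geq \frac{-(2(n-1))}{n}$ for even $n$, which is precisely the claim.

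Alternatively, one could argue in parallel to Theorem~\ref{t2}, working directly from part~$(\romannumeral 4)$ of Theorem~\ref{T1}, namely $\lambda_{n-1}\geq \frac{-m}{\sqrt{n_{1}n_{2}}}$. Setting $m=n-1$ for a minimally connected $G$ and evaluating $\frac{-m}{\sqrt{n_{1}n_{2}}}$ at the balanced, unbalanced, and average bipartitions produces the smallest eigenvalues $\eta_{21},\eta_{22},\eta_{23}$ of the three bipartite quotient matrices. Because each of these is the negative of the corresponding largest eigenvalue, i.e.\ $\eta_{2j}=-\eta_{1j}$, the ordering $\eta_{11}\leq\eta_{13}\leq\eta_{12}$ established inside the proof of Theorem~\ref{t2} reverses to $\eta_{21}\geq\eta_{23}\geq\eta_{22}$, so that $\eta_{2}=\max\{\eta_{21},\eta_{22},\eta_{23}\}=\eta_{21}$, the balanced value, giving the stated bound.

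I expect no genuine obstacle here: the symmetry route reduces the whole statement to a single sign change applied to Theorem~\ref{t2}, and the direct route reuses the three-way comparison of bipartition values already carried out there, merely read with reversed inequalities. The one point that must be stated carefully is the spectral symmetry $\lambda_{n-1}=-\lambda_{2}$, since it is exactly the bipartiteness of $G$ that forces the lower bound on $\lambda_{n-1}$ to mirror the upper bound on $\lambda_{2}$; if I wished to avoid invoking that symmetry explicitly, the parallel quotient-matrix argument via Theorem~\ref{T1}$(\romannumeral 4)$ supplies a self-contained alternative.
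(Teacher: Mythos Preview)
Your proposal is correct. The paper does \emph{not} take your primary symmetry route; instead it follows exactly what you describe as the alternative: it starts from Theorem~\ref{T1}$(\romannumeral 4)$, sets $m=n-1$, evaluates $\eta_{21},\eta_{22},\eta_{23}$ at the balanced, unbalanced, and average bipartitions, and compares them case by case to conclude $\eta_{2}=\eta_{21}$. Your symmetry argument $\lambda_{n-1}=-\lambda_{2}$ is shorter and cleaner, collapsing the entire three-case computation into a single sign flip of Theorem~\ref{t2}; the paper's approach, by contrast, is self-contained (it does not need to quote Theorem~\ref{t2}) but duplicates all the arithmetic already done there. Since the bipartite spectral symmetry is already invoked inside the proof of Theorem~\ref{T1} to derive part~$(\romannumeral 2)$ from part~$(\romannumeral 1)$, your primary route is fully justified within the paper's framework and is the more economical of the two.
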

\proof
In Theorem \ref{T1}, we have proved that 
\begin{equation}\label{EQ3}
 \lambda_{n-1}\geq\frac{-m}{\sqrt{n_{1}n_{2}}}.   
\end{equation}\\
Consider the following cases.\\
\textbf{Case 1:}  $G$ is minimally connected with balanced bipartition. Substituting for $m, n_{1}$ and $n_{2}$ for a balanced bipartition in equation \eqref{EQ3}, we have
\begin{center}
$\eta_{21}=\begin{cases}
\frac{-(4(n-1))}{\sqrt{3n^{2}+8n-16}} & \text{if $n$ is even}\medskip\\
\frac{-(4(n-1))}{\sqrt{3n^{2}+6n-9}} & \text{if $n=4k+3$ where $k=0,1,2,\dots.$}\medskip\\
\frac{-(4(n-1))}{\sqrt{3n^{2}+10n-25}} & \text{if $n=4k+1$ where $k=0,1,2,\dots.$}
\end{cases}$
\end{center}
\textbf{Case 2:} $G$ is minimally connected with unbalanced bipartition. Substituting for $m, n_{1}$ and $n_{2}$ for an unbalanced bipartition in equation \eqref{EQ3}, we have
\begin{center}
$\eta_{22}=\frac{-(n-1)}{\sqrt{n-1}}=-\sqrt{n-1}.$
\end{center}
\textbf{Case 3:}
$G$ is minimally connected with average bipartition. Substituting for $m, n_{1}$ and $n_{2}$ for an average bipartition in equation \eqref{EQ3}, we have
\begin{center}
$\eta_{23}= \begin{cases}
\frac{-(2(n-1))}{n} & \text{$n$ is even}\medskip\\
\frac{-(2(n-1))}{\sqrt{n^{2}-1}} &  \text{$n$ is odd}.
\end{cases}$
\end{center}
When $n$ is even, comparing all the three cases, since the numerator contains $(n-1)$ as a common term we have
\begin{center}
$\sqrt{n-1}\leq \frac{\sqrt{3n^{2}+8n-16}}{4} \leq \frac{n}{2}$. 
\end{center}
which implies that $\frac{-(2(n-1))}{n}\geq \frac{-(4(n-1))}{\sqrt{3n^{2}+8n-16}}\geq -\sqrt{n-1}.$ \\ \\
When $n=4k+3$ where $k=0,1,2,\dots,$ comparing all the three cases, since the numerator contains $(n-1)$ as a common term we have
\begin{center}
$\sqrt{n-1}\leq \frac{\sqrt{3n^{2}+6n-9}}{4} \leq \frac{\sqrt{n^{2}-1}}{2}$.
\end{center} which implies that $\frac{2(n-1)}{\sqrt{n^{2}-1}}\leq \frac{4(n-1)}{\sqrt{3n^{2}+6n-9}} \leq \sqrt{n-1}$. \\ \\
When $n=4k+1$ where $k=0,1,2,\dots,$ comparing all the three cases, since the numerator contains $(n-1)$ as a common term we have
\begin{center}
$\sqrt{n-1}\leq \frac{\sqrt{3n^{2}+10n-25}}{4} \leq \frac{\sqrt{n^{2}-1}}{2}$.
\end{center} which implies that  $\frac{2(n-1)}{\sqrt{n^{2}-1}}\leq \frac{4(n-1)}{\sqrt{3n^{2}+10n-25}} \leq \sqrt{n-1}$.\\ \\
Comparing all the three cases, we get $\eta_{21}\geq \eta_{23}\geq \eta_{22}.$
From this we have, $\eta_{2}=\eta_{21}$.
Applying this in equation (\ref{EQ3}), we get a tight lower bound in terms of $n$ as
\begin{center}
    $\lambda_{n-1}\geq \begin{cases}
\frac{-(2(n-1))}{\sqrt{n^{2}-1}} &  \text{$n$ is odd}\medskip\\
\frac{-(2(n-1))}{n} & \text{$n$ is even}.
\end{cases}$\\
\end{center}
\endproof
\subsection{Bounds for the Second largest Laplacian eigenvalue}\label{sec32}
In this section we derive the upper bound for the second largest Laplacian eigenvalue of a connected bipartite graph $G$.
\begin{theorem}\label{T5}
Consider a Bipartite graph $G$ with bipartition $V=(X,Y)$. Let $\mu_{1}\geq \mu_{2}\geq \dots\geq \mu_{n-1}\geq \mu_{n}$ be the eigenvalues of the Laplacian matrix $L$ of $G$, $L_{Q}$ be the bipartite quotient matrix of $L$ and $\theta_{1}$ and $\theta_{2}$ the eigenvalues of $L_{Q}.$ Then
\begin{center}
\begin{enumerate}
\begin{enumerate}
\begin{enumerate}
    \item $\mu_{1}\geq \theta_{1} \geq \mu_{2}$
    \item $\mu_{2} \leq \frac{mn}{n_{1}n_{2}}$
\end{enumerate}
\end{enumerate}
\end{enumerate}
\end{center}
\end{theorem}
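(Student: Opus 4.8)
The plan is to run the argument of Theorem \ref{T1} essentially verbatim, but with the Laplacian $L=D-A$ in place of the adjacency matrix. First I would write $L$ in block form relative to the bipartition $V=(X,Y)$,
\[
L=\begin{bmatrix} D_{1} & -A_{12}\\ -A_{21} & D_{2}\end{bmatrix},
\]
where $D_{1},D_{2}$ are the diagonal degree matrices of the two sides and $A_{12}=A_{21}^{T}$ is the biadjacency matrix. The bipartite quotient matrix is obtained by summing the entries of each block and dividing by its number of rows; since the degrees on either side sum to the edge count $m$, this gives
\[
L_{Q}=\begin{bmatrix} \frac{m}{n_{1}} & -\frac{m}{n_{1}}\\[2pt] -\frac{m}{n_{2}} & \frac{m}{n_{2}}\end{bmatrix}.
\]
Its characteristic equation is $\theta^{2}-\frac{mn}{n_{1}n_{2}}\theta=0$ (using $\frac{1}{n_{1}}+\frac{1}{n_{2}}=\frac{n}{n_{1}n_{2}}$), so that $\theta_{1}=\frac{mn}{n_{1}n_{2}}$ and $\theta_{2}=0$.

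Next, exactly as in Theorem \ref{T1}, I would take $S=\tilde S K^{-1/2}$ with $K=\mathrm{diag}(n_{1},n_{2})$, so that $S$ has orthonormal columns and $L_{Q}=S^{T}LS$. Setting $C=SL_{Q}S^{T}$ and multiplying out block by block, the rank-one structure of $L_{Q}$ is inherited by $C$: since $S^{T}S=I$, the nonzero spectrum of $C$ coincides with that of $L_{Q}$, so $C$ has the eigenvalue $\theta_{1}=\frac{mn}{n_{1}n_{2}}$ once and $0$ with multiplicity $n-1$. Applying the interlacing of Lemma \ref{L1} and reading the topmost link of the resulting chain then yields $\mu_{1}\geq\theta_{1}\geq\mu_{2}$, which is part (i). Part (ii) is then immediate, since its right inequality is exactly $\mu_{2}\leq\theta_{1}=\frac{mn}{n_{1}n_{2}}$.

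The hard part will be the lower-bound direction $\theta_{1}\geq\mu_{2}$. In Theorem \ref{T1} the corresponding step was supported by the symmetry of the adjacency spectrum of a bipartite graph about the origin, which let parts (i) and (ii) there be read off one another; the Laplacian spectrum carries no such symmetry, so here $\theta_{1}\geq\mu_{2}$ must be extracted from the padded interlacing chain of $C$ directly. I would therefore take care to verify that the $n-1$ zero eigenvalues of $C$ all sit at or below $\theta_{1}$ in the interlacing, so that enlarging $L_{Q}$ to the $n\times n$ matrix $C$ does not perturb the top inequality; the all-ones vector, being simultaneously the eigenvector of $\mu_{n}=0$ and of $\theta_{2}=0$, is the natural tool for aligning the two kernels and locating where the zeros fall. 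Establishing that the head of the chain genuinely reads $\mu_{1}\geq\theta_{1}\geq\mu_{2}$ --- rather than merely $\mu_{1}\geq\theta_{1}\geq\mu_{n-1}$, which is all that the bare two-eigenvalue interlacing of $L_{Q}$ with $L$ guarantees --- is the delicate point on which the theorem rests, and I would want to check it against small examples before trusting the bound.
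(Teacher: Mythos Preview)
Your plan mirrors the paper's proof step for step: the paper writes $L$ in block form, computes $L_{Q}$ and its eigenvalues $\theta_{1}=\frac{mn}{n_{1}n_{2}}$ and $\theta_{2}=0$, forms $U=SL_{Q}S^{T}$ (your $C$), observes that $U$ has eigenvalues $\beta_{1}=\theta_{1}$ and $\beta_{i}=0$ for $i\geq 2$, and then asserts the chain $\mu_{1}\geq\beta_{1}\geq\mu_{2}\geq\cdots$, from which (i) and (ii) are read off.

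The concern you raise in your last paragraph is well placed, and the paper does not resolve it. It simply writes the chain $\mu_{1}\geq\beta_{1}\geq\mu_{2}\geq\cdots\geq\mu_{n-1}\geq\beta_{m}\geq\mu_{n}$ without further justification; as you correctly note, Lemma~\ref{L1} applied to the $2\times 2$ quotient $L_{Q}$ yields only $\mu_{1}\geq\theta_{1}\geq\mu_{n-1}$ and $\mu_{2}\geq\theta_{2}\geq\mu_{n}$, and $U=SL_{Q}S^{T}$ is neither a principal submatrix nor a quotient of $L$, so no standard interlacing theorem relates their full spectra in the asserted alternating fashion. Your instinct to test small cases is sound: the inequality $\mu_{2}\leq\frac{mn}{n_{1}n_{2}}$ already fails for disconnected bipartite graphs (take $K_{2,2}$ together with three isolated vertices on each side, so $n_{1}=n_{2}=5$, $m=4$; then $\mu_{2}=2$ but $\frac{mn}{n_{1}n_{2}}=\frac{40}{25}=1.6$). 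Hence connectivity must enter the argument somewhere, and neither your sketch nor the paper's proof invokes it.
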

\proof
The proof is similar to that of Theorem \ref{T1}.
\\
Let $L$ be the Laplacian matrix of $G$ represented in the following block matrix form with respect to the bipartition $V=(X,Y)$ \begin{center}
$L=\begin{bmatrix}
L_{11} & L_{12} \\
L_{21} & L_{22}
\end{bmatrix}.$
\end{center}
Let $L_{Q}$ be the Bipartite quotient matrix of the Laplacian matrix $L$ of $G$. Then
\begin{center}
$L_{Q}=
\begin{bmatrix}
\frac{m}{n_{1}} & \frac{-m}{n_{1}} \\
\frac{-m}{n_{2}} & \frac{m}{n_{2}}
\end{bmatrix}.$    
\end{center}
The characteristic equation of $L_{Q}$ is
$\mu^{2}-\frac{ mn}{n_{1}n_{2}}\mu=0$
Then the eigenvalues of $L_{Q}$ are $\theta_{1}=\frac{mn}{n_{1}n_{2}}$ and $\theta_{2}=0$.\\
To get the generalized interlacing consider the following.\\
The characteristic matrix of $G$ is given by 
$\tilde S= \begin{bmatrix}
J_{n_{1}\times 1}& 0_{n_{1}\times 1}\\
0_{n_{2}\times 1}& J_{n_{2}\times 1}
\end{bmatrix}_{n\times 2}$\\ 
Let $S=\tilde{S}K^{\frac{-1}{2}}$, where $K=diag(|X|,|Y|)$ i.e., $K=\begin{bmatrix}
n_{1} & 0 \\
0 & n_{2}
\end{bmatrix}.$ \\
Then $S= \begin{bmatrix}
P_{n_{1}\times 1}& 0_{n_{1}\times 1}\\
0_{n_{2}\times 1}& R_{n_{2}\times 1}
\end{bmatrix}_{n\times 2}.$\\
where $P_{n_{1}\times 1}$ and $R_{n_{2}\times 1}$ are column matrices with entries $(p_{i1})=\frac{1}{\sqrt{n_{1}}}$ where $i=1,2,\dots, n_{1}$ and  $(r_{j1})=\frac{1}{\sqrt{n_{2}}}$ where $j=1,2,\dots , n_{2}$.\\
From the proof of Lemma \ref{L1} \cite{WH2} we have,
\begin{equation}\label{EQ4}
    L_{Q}= S^{T}LS.
\end{equation}
Left and right multiplying equation \eqref{EQ4} by $S$ and $S^{T}$ we have \\$SL_{Q}S^{T}=L$. \\
Now consider $SL_{Q}S^{T}$ and denote it by $U$.\\
Let $U=(u_{ij})=\begin{cases}
\frac{m}{n_{1}^{2}} & \text{$v_{i},v_{j} \in X$}\medskip \\
\frac{m}{n_{2}^{2}} & \text{$v_{i},v_{j} \in Y$} \medskip \\
\frac{-m}{n_{1}\sqrt{n_{1}n_{2}}} & \text{$v_{i} \in X$ and $v_{j} \in Y$} \medskip \\
\frac{-m}{n_{2}\sqrt{n_{1}n_{2}}} & \text{$v_{i} \in Y$ and $v_{j} \in X$}
\end{cases}$ \\
Then the block matrix representation of $U$ is given by
\begin{center}
$U= \begin{bmatrix}
E & F \\
G & H 
\end{bmatrix}_{n\times n}.$
\end{center}
Let the eigenvalues of $U$ be $\beta_{1}\geq \beta_{2}\geq \dots \geq \beta_{n}.$ The eigenvalues of $U$ are the trace of  $U$ and $0$, that is $\beta_{1}=\frac{mn}{n_{1}n_{2}}$ and $\beta_{i}=0$ for $i=2,3,\dots,n$. \\
Then the interlacing becomes,\\
$\mu_{1}\geq \beta_{1}\geq \mu_{2}\geq \dots \geq \mu_{n-1}\geq \beta_{m}\geq \mu_{n}$ which implies that $\mu_{1}\geq \beta_{1}\geq \mu_{2}$.
By comparing the eigenvalues of $L_{Q}$ and $U$ we have, $\beta_{1}=\theta_{1}$.\\
Let us consider $\mu_{1}\geq \beta_{1}\geq \mu_{2}$. Since $\beta_{1}=\theta_{1}$, we have $\mu_{1}\geq \theta_{1} \geq \mu_{2}$. This proves $(\romannumeral 1).$
\\ To prove $(\romannumeral 2)$, using $(\romannumeral 1)$ we have,
\begin{center}
$\mu_{2} \leq \theta_{1}=\frac{mn}{n_{1}n_{2}}.$
\end{center}
\endproof
\begin{corollary}\label{lapcor}
If $G$ is a regular bipartite graph then $\mu_{2} \leq \frac{2m}{n_{1}}$.
\end{corollary}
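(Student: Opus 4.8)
The plan is to invoke part $(\romannumeral 2)$ of Theorem \ref{T5}, which already gives $\mu_2 \leq \frac{mn}{n_1 n_2}$ for an arbitrary connected bipartite graph, and then specialize the parameters using regularity. This mirrors exactly the derivations of Corollaries \ref{t1cor1} and \ref{t1cor3}, where the bound from Theorem \ref{T1} was simplified by substituting $n_1 = n_2$.

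The first step is to justify that a regular bipartite graph forces $n_1 = n_2$. If $G$ is $d$-regular with bipartition $(X,Y)$, then counting the edges from each side gives $m = d\,|X| = d\,n_1$ and $m = d\,|Y| = d\,n_2$, so $n_1 = n_2$. Consequently $n = n_1 + n_2 = 2n_1$.

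The second step is the substitution itself. Plugging $n_2 = n_1$ and $n = 2n_1$ into the inequality $\mu_2 \leq \frac{mn}{n_1 n_2}$ from Theorem \ref{T5} yields
\begin{center}
$\mu_2 \leq \dfrac{m\,(2n_1)}{n_1 \cdot n_1} = \dfrac{2m}{n_1},$
\end{center}
which is the claimed bound.

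There is essentially no obstacle here: the content is carried entirely by Theorem \ref{T5}, and the corollary is a one-line specialization. The only point worth stating explicitly is the edge-counting argument that establishes $n_1 = n_2$ under regularity, since this is the hypothesis that collapses the two-parameter bound $\frac{mn}{n_1 n_2}$ into the single-parameter expression $\frac{2m}{n_1}$.
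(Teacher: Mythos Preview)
Your proof is correct and follows exactly the paper's approach: the paper's proof simply states that $n_1=n_2$ for a regular bipartite graph and substitutes into Theorem~\ref{T5}. Your version adds the (harmless) edge-counting justification for $n_1=n_2$, but the argument is otherwise identical.
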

\begin{proof}
For a regular bipartite graph $G$, $n_{1}=n_{2}$. Substituting this in Theorem \ref{T5}, we get the result. 
\end{proof}
\noindent \textbf{Note:}\\
For a complete bipartite graph $G$, the size of $G$ is equal to the product of the orders of the bipartitions of $G$, that is $m=n_{1}n_{2}$. Hence $\mu_{2} \leq n$.
\section{Vertex-split of a bipartite graph}\label{sec4}
In this section, we define vertex split of a bipartite graph. Next, we prove theorems related to the connectivity and expansion of vertex split of a bipartite graph.
\begin{defn}(Vertex-split of a bipartite graph)
Let $G=(X\cup Y, E)$ be a connected bipartite graph with $\delta\geq 4$ where $|X|=n_{1}\geq 4$, 
$|Y|=n_{2}\geq 3$ with $n_{1}> n_{2}.$ A graph $G'=
(X'\cup Y',E')$ is said to be a vertex-split of $G$ if
\begin{enumerate}
\item $|X'|=|X|$ and $|Y'|=|Y_{a}|+|Y_{b}|=2|Y|$ where $Y_{a}=\{y_{1a},y_{2a},\dots , y_{n_{1}a}\}$, $Y_{b}=\{y_{1b},y_{2b},\dots ,y_{n_{2}b}\}$.
\item Let $deg(y)=d_{y}=d_{a}+d_{b}$ $\forall y\in Y$ such that $d_{a}=deg(y_{a})$ $\forall y_{a}\in Y_{a}$  and $d_{b}=deg(y_{b})$ $\forall y_{b}\in Y_{b}$ with the condition that 
$|d_{a}-d_{b}|=\begin{cases}
1 & \text{if $d_{y}$ is odd}\\
0 & \text{if $d_{y}$ is even}.
\end{cases}$
    \item $N(Y_{a})\cap N(Y_{b})\neq \{\emptyset\}$
\end{enumerate}
\end{defn}
\textbf{Example:} Vertex-split of a complete graph $K_{8,4}$ is shown in figure \ref{fig:1} where $|X|=|X'|=8$, $|Y|=4$, $|Y'|=8$ \\

\begin{figure}[H]
   \centering
   \includegraphics[width=6cm]{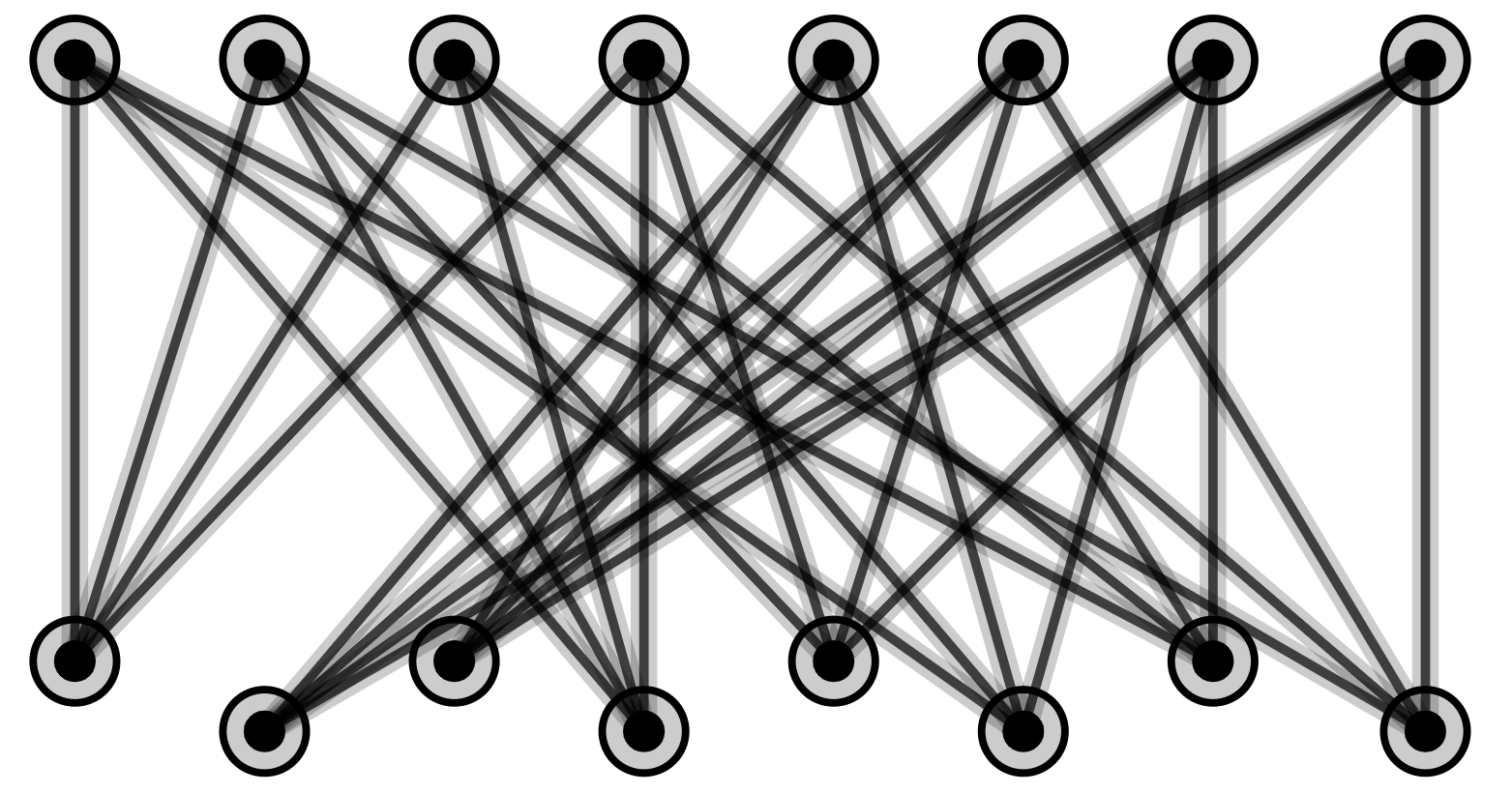}
    \caption{Vertex-split of $K_{8,4}$}
    \label{fig:1}
\end{figure}
\subsection{Connectivity of Vertex-split of a bipartite graph}\label{sec41}
Let $G'$ be a vertex-split of a bipartite graph $G$ with minimum degree $\delta'$ and edge connectivity $\kappa'(G')$. The adjacency eigenvalues of $G'$ are denoted as $\lambda_{1}'\geq \lambda_{2}'\geq \dots \geq \lambda_{n}'.$
\begin{theorem}\label{R1}
 Let $\delta' \geq k \geq 2$ be a constant and let $G(X\cup Y, E)$ be a $d-$regular bipartite graph and $G'$ the vertex-split of $G$. If 
 \begin{equation*}
\lambda_{2}'\geq \frac{2k-1}{\sqrt{2}}   
 \end{equation*}
 then $\kappa'(G')\geq k.$
\end{theorem}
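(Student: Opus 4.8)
The plan is to argue by contraposition: assume the edge connectivity is deficient, $\kappa'(G')\leq k-1$, and show that this forces $\lambda_2'<\frac{2k-1}{\sqrt{2}}$, contradicting the hypothesis. So I would fix a minimum edge cut, i.e. a partition $V(G')=V_1\cup V_2$ whose crossing edges number exactly $\kappa'(G')\leq k-1$. Because every vertex of $G'$ has degree at least $\delta'\geq k>k-1$, neither cell can be a single vertex (an isolated vertex alone would already send $\geq k$ edges across the cut), so both $V_1$ and $V_2$ are genuinely ``large.'' This nontriviality is what I will lean on when estimating the cell parameters below.

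Next I would feed this cut into the quotient-matrix machinery already used in Theorem~\ref{T1}. Refine the cut by the bipartition of $G'$ into the cells $V_1\cap X'$, $V_1\cap Y'$, $V_2\cap X'$, $V_2\cap Y'$ (or keep the coarser two-cell partition, whichever makes the interlacing cleanest) and form the associated quotient matrix $A_Q'$ of the adjacency matrix $A'$. By Lemma~\ref{L1} the eigenvalues of $A_Q'$ interlace those of $A'$, which gives a handle on $\lambda_2'$ through the eigenvalue of $A_Q'$ in the matching position. I expect the factor $\sqrt{2}$ to enter precisely at this normalization step, since $|Y'|=2|Y|$ changes the row/column counts of the $Y'$-cells relative to the $X'$-cells exactly as the $\frac{m}{\sqrt{n_1 n_2}}$ bound of Theorem~\ref{T1} does. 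The off-diagonal (crossing) entries of $A_Q'$ are directly controlled by $\kappa'\leq k-1$, and the balanced-split condition $|d_a-d_b|\leq 1$ together with $N(Y_a)\cap N(Y_b)\neq\emptyset$ from the definition of the vertex-split is what I would use to keep the diagonal (internal-density) entries in hand. The goal is to drive the interlacing eigenvalue strictly below $\frac{2k-1}{\sqrt{2}}$, forcing $\lambda_2'<\frac{2k-1}{\sqrt{2}}$ and closing the contradiction.

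The step I expect to be the main obstacle is controlling the \emph{internal-density} (diagonal) entries of the quotient and, with them, the \emph{direction} of the final inequality. In the classical regular picture, a small cut acts as a good test vector in the Rayleigh quotient and pushes the relevant eigenvalue \emph{up} toward $\lambda_1$, so extracting a clean \emph{upper} bound on $\lambda_2'$ from a small cut is subtle and cannot come from the crossing edges alone. It must use the irregular bipartite geometry of $G'$ in an essential way: the $\sqrt{2}$-imbalance produced by doubling $Y$, and the hypothesis $\delta'\geq k$ to stop either cell from becoming too dense internally. Making the resulting estimate land at exactly $\frac{2k-1}{\sqrt{2}}$, rather than at a weaker threshold, is where I would concentrate the verification, and I would cross-check the computation against the plain regular case to be sure the interlacing inequality is being invoked consistently and that no sign or direction has slipped.

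Should the quotient route prove too blunt, a natural fallback is the algebraic-connectivity chain $a(G')\leq \kappa_v(G')\leq \kappa'(G')$: bounding the second smallest Laplacian eigenvalue of $G'$ from below by $k$ would immediately give $\kappa'(G')\geq k$. For the bipartite, nearly two-valued degree structure of $G'$ one could try to translate the hypothesis on $\lambda_2'$ into such a Laplacian bound via the signless-Laplacian remark for bipartite graphs; but I would again expect the translation between the adjacency threshold $\frac{2k-1}{\sqrt{2}}$ and a Laplacian/connectivity statement to be the delicate point, for the same directional reason.
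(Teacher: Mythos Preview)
Your plan diverges substantially from what the paper actually does, and the obstacle you flag is real. The paper does \emph{not} build a quotient matrix from a minimum edge cut; it simply applies Theorem~\ref{T1}(iii) to $G'$ with its natural bipartition $(X',Y')$. Since $|X'|=n_1$, $|Y'|=2n_2$, and $m'=m$, the bound reads $\lambda_2'\leq m/\sqrt{2n_1 n_2}$. Because $G$ is $d$-regular bipartite, $n_1=n_2$ and $m=n_1 d$, so this collapses to $\lambda_2'\leq d/\sqrt{2}$; the factor $\sqrt{2}$ is exactly the doubling of $|Y|$ you anticipated, but no cut is ever used. The paper then asserts $\delta\leq 2k-1$ to conclude $\lambda_2'\leq (2k-1)/\sqrt{2}$.

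Your cut-refined quotient route faces precisely the directional problem you identify: a small edge cut furnishes a test vector that pushes $\lambda_2'$ \emph{up} toward $\lambda_1'$, so interlacing against a cut of size $\leq k-1$ will not produce an upper bound of the form $\lambda_2'<\tfrac{2k-1}{\sqrt{2}}$. The balanced-split and neighbourhood-intersection conditions in the definition of the vertex-split do not reverse this; they constrain degrees, not the Rayleigh quotient of the cut indicator. So the refinement you propose does not close, and your Laplacian fallback runs into the same direction-of-inequality issue when translated back to $\lambda_2'$.

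You should also be aware that the paper's own argument is not sound at the last step: having assumed $\delta'\geq k$ (hence $\delta\geq 2k$), it then writes ``suppose $\delta\leq 2k-1$,'' which is contradictory, and the connectivity hypothesis $\kappa'(G')\leq k-1$ is never actually invoked. So the paper's ``simpler'' proof is the intended route for comparison, but it does not establish the theorem either.
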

\begin{proof}
Our proof is by contradiction.
We prove that if $G'$ is a connected bipartite graph of order $n'$ and minimum degree $\delta'$ such that $\kappa'(G')\leq 2k'-1$, then
\begin{equation}\label{abeq}
\lambda_{2}' \leq \frac{2k-1}{\sqrt{2}}.
\end{equation} 
 Let $n'=n_{1}'+n_{2}'$ where $n_{1}'=n_{1}$ and $n_{2}'=2n_{2}$ and $\delta'=\frac{\delta}{2}.$ Let $\delta'\geq k \geq 2.$ From this we have $\frac{\delta}{2}\geq k \geq 2.$ Consider $\frac{\delta}{2}\geq k.$ 
 By (\romannumeral 3) of Theorem \ref{T1} we have,
 \begin{equation}\label{abeq}
    \lambda_{2} \leq \frac{m}{\sqrt{n_{1}n_{2}}}.
\end{equation} 
 For a regular bipartite graph $n_{1}=n_{2}= \frac{n}{2}$ and $m=\frac{nd}{2}.$ Suppose $\delta \leq 2k-1.$
Substituting this in equation \ref{abeq} we get,
\begin{equation*}
\lambda_{2}'\leq \frac{d}{\sqrt{2}} = \frac{\delta}{\sqrt{2}}
\end{equation*}
\begin{equation*}
\lambda_{2}'\leq \frac{2k-1}{\sqrt{2}}  
\end{equation*}
This completes the proof.
\end{proof}

\textbf{Example for connectivity of vertex-split of a regular graph :}
Consider a complete bipartite graph $K_{5,5}.$ The vertex-split of $K_{5,5}$ is a graph with minimum degree $\delta'=2$ and the second largest eigenvalue $\lambda_{2}'=2.34.$ Since it is regular graph, from theorem \ref{T1} we have, for $k=2,$ $\frac{2k-1}{\sqrt{2}}=2.121$
\begin{figure}[H]
   \centering
   \includegraphics[width=8cm]{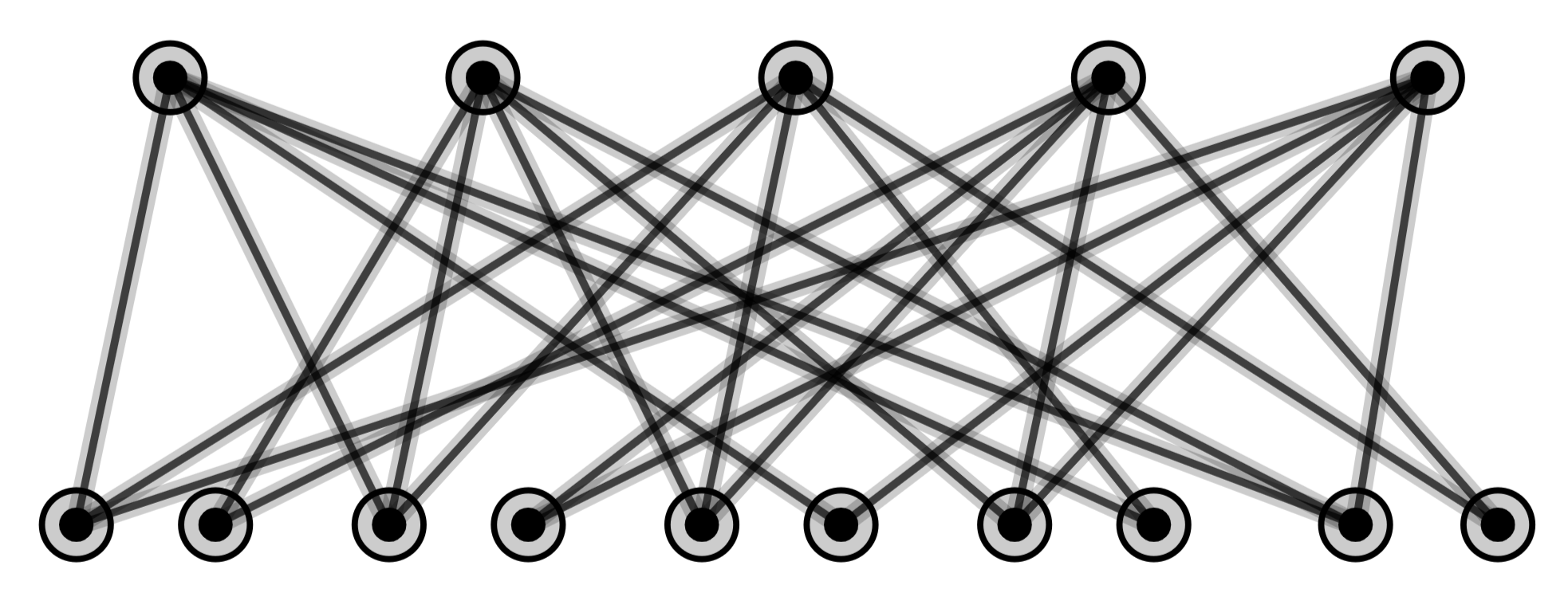}
    \caption{vertex-split of $K_{5,5}$}
    \label{fig:2}
\end{figure}

\begin{theorem}\label{r2}
Let $G$ be a biregular bipartite graph with $V=(X,Y)$  where $|X|=n_{1}$, $|Y|=n_{2}$ and $G'$ the vertex-split of $G$. If \begin{equation*}
\lambda_{2}' \geq \frac{n_{2}(2k-1)}{\sqrt{2n_{1}n_{2}}}
\end{equation*}
 then $\kappa'(G')\geq k.$ 
\end{theorem}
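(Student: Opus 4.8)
The plan is to mirror the argument of Theorem \ref{R1}, replacing the regular bound by the biregular one and tracking the parameters of the split graph $G'$ rather than those of $G$. The starting point is Theorem \ref{T1}(iii), applied not to $G$ but to $G'$, whose bipartition $(X',Y')$ has parts of sizes $n_1'=|X'|=n_1$ and $n_2'=|Y'|=2n_2$. Since splitting a vertex $y$ of degree $d_y$ into $y_a,y_b$ with $d_a+d_b=d_y$ merely redistributes the edges incident to $y$, the split preserves the total number of edges, so $m'=m$. Theorem \ref{T1}(iii) then gives
$$\lambda_2' \leq \frac{m'}{\sqrt{n_1' n_2'}} = \frac{m}{\sqrt{2 n_1 n_2}}.$$

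First I would use biregularity to rewrite $m$. Counting edges from the $Y$-side, $m=n_2 d_Y$, where $d_Y$ is the common degree of the vertices of $Y$. Substituting,
$$\lambda_2' \leq \frac{n_2 d_Y}{\sqrt{2 n_1 n_2}},$$
which is precisely the target bound with $d_Y$ in place of $2k-1$. Thus the whole theorem reduces to relating the hypothesis $\kappa'(G')\geq k$ to the degree $d_Y$: concretely, I would argue the contrapositive, showing that if $\kappa'(G')$ is too small then $d_Y$ is too small, which in turn pushes $\lambda_2'$ below the stated threshold and contradicts the hypothesis. Note that in the regular case $n_1=n_2$ collapses the displayed bound to $\frac{2k-1}{\sqrt 2}$, recovering Theorem \ref{R1}.

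The link between connectivity and degree is where the real work lies, and it is the step I expect to be the main obstacle. The one unconditional inequality is $\kappa'(G')\leq \delta'(G')$, so a failure of $k$-edge-connectivity should be read through the minimum degree of $G'$. In the split graph the split vertices carry degrees $d_a,d_b$ with $|d_a-d_b|\leq 1$ and $d_a+d_b=d_Y$, so the smaller one equals $\lfloor d_Y/2\rfloor$; meanwhile each $x\in X'$ keeps its original degree $d_X=\tfrac{n_2}{n_1}d_Y$. I would first determine which side realises $\delta'(G')$ (this depends on the imbalance ratio $n_2/n_1$, and the standing hypotheses $n_1>n_2$ and $\delta\geq 4$ are what keep the split degrees meaningful), and then translate $\kappa'(G')<k$ into $\delta'(G')<k$ and hence into an upper bound on $d_Y$ of the form $d_Y\leq 2k-1$.

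Finally I would reassemble the pieces: $d_Y\leq 2k-1$ inserted into the displayed estimate yields $\lambda_2'\leq \frac{n_2(2k-1)}{\sqrt{2 n_1 n_2}}$, contradicting $\lambda_2'\geq \frac{n_2(2k-1)}{\sqrt{2n_1n_2}}$ once the strict-versus-nonstrict inequalities and the parity of $d_Y$ are handled carefully. The delicate points are the off-by-one coming from $|d_a-d_b|\leq 1$ and the decision of whether the minimum degree sits on $X'$ or on the split side; the eigenvalue estimate itself is an immediate consequence of Theorem \ref{T1}.
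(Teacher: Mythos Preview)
Your proposal follows essentially the same route as the paper's own argument: apply Theorem~\ref{T1}(iii) to $G'$ with $n_1'=n_1$, $n_2'=2n_2$, and $m'=m$; rewrite $m$ via the biregular count $m=n_2 d_Y$ to obtain $\lambda_2'\leq \frac{n_2 d_Y}{\sqrt{2n_1 n_2}}$; and then argue the contrapositive by bounding $d_Y\leq 2k-1$ through the minimum degree $\delta'=\delta/2$ of the split graph. The paper's proof is in fact terser than yours on the connectivity-to-degree step---it simply posits ``Suppose $\delta\leq 2k-1$'' without the analysis you sketch of which side realises $\delta'(G')$ or the parity of $d_Y$---so your version is, if anything, more careful about exactly the point you flag as delicate.
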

\begin{proof}
The proof is by contradiction. We prove that if $G'$ is a connected bipartite graph of order $n'$ and minimum degree $\delta'$ such that $\kappa'(G')\leq 2k-1$, then 
\begin{equation*}
\lambda_{2}'\geq \frac{n_{2}(2k-1)}{\sqrt{2n_{1}n_{2}}}.
\end{equation*}
 Let $n'=n_{1}'+n_{2}'$ where $n_{1}'=n_{1}$ and $n_{2}'=2n_{2}$ and $\delta'=\frac{\delta}{2}.$ Let $\delta'\geq k \geq 2.$ Therefore we have, $\frac{\delta}{2}\geq k \geq 2.$ Consider $\frac{\delta}{2}\geq k.$ Suppose $\delta \leq 2k-1.$\\
 For a biregular bipartite graph $n_{1}d_{1}=n_{2}d_{2}=m,$ where $d_{1}$ and $d_{2}$ are the degrees of the bipartition with $d_{1}>d_{2}.$ Similarly $n_{1}'d_{1}'=n_{2}'d_{1}'=m'=m.$
Substituting this in (\romannumeral 3) of Theorem \ref{T1} we have,
\begin{equation*}
 \lambda_{2}'\leq \frac{n_{2}d_{2}}{\sqrt{2n_{1}n_{2}}}= \frac{n_{2}\delta}{\sqrt{2n_{1}n_{2}}}
 \end{equation*}
 \begin{equation*}
 \lambda_{2}'\leq \frac{n_{2}(2k-1)}{\sqrt{2n_{1}n_{2}}}
\end{equation*}
This completes the proof.
\end{proof}
\subsection{Expansion of Vertex-split of a bipartite graph}\label{sec42}

\begin{theorem}\label{r4}
The vertex-split of a complete bipartite graph $K_{m,n}$ is a bipartite expander if
\begin{enumerate}
    \item $n$ is even
    \item $n\leq m \leq 2n$
\end{enumerate}
\end{theorem}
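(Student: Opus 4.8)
The plan is to make the vertex-split of $G=K_{m,n}$ explicit and then verify the expansion inequality $|N(S)|\ge\alpha|S|$ by a direct covering count, rather than through the interlacing machinery of Section~\ref{sec3}. Since the hypothesis is that $n$ is even, I would split the part of $G$ whose vertices have degree $n$; in $K_{m,n}$ this is the part of size $m$, and each of its vertices divides into two children of degree exactly $n/2$, whose neighbourhoods partition the opposite part. I would first record the parameters of the resulting graph $G'$: the split part has $2m$ vertices, each of degree $D:=n/2$, while the unsplit part keeps its $n$ vertices, each of degree $m$ (every edge is inherited by exactly one child). Taking the split part as the left side, which is the larger side since $2m\ge 2n>n$, the goal is to exhibit a constant $\alpha$ with $\tfrac{D}{2}<\alpha<D$ such that $|N(S)|\ge\alpha|S|$ for every left-set $S$ up to the prescribed size.

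The heart of the matter is a lower bound on $|N(S)|$ for $S$ in the split part, where $N(S)$ lives in the unsplit part of size $n$. Because $G$ is complete, each original vertex's two children have neighbourhoods of size exactly $n/2$ whose union is the whole unsplit part, so a single child already attains $|N(v)|=D$. For larger $S$ I would argue position-by-position over the $n$ unsplit vertices: a given unsplit vertex belongs to $N(S)$ as soon as $S$ contains one child adjacent to it, and I would lower-bound the number of such vertices forced to appear as $|S|$ increases. The parity hypothesis $n$ even keeps the two child-neighbourhoods of every vertex at exactly equal size $n/2$, making these counts exact; the ratio hypothesis $n\le m\le 2n$ controls the number $2m$ of children relative to the $n$ available neighbours and is what places the final factor $\alpha$ inside $\bigl(\tfrac{D}{2},D\bigr)$.

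I expect the main obstacle to be excluding the degenerate splits that would collapse expansion. If every vertex of the size-$m$ part were split along one fixed partition of the opposite part into halves $X_1\cup X_2$, then all the $a$-children would share the neighbourhood $X_1$, and choosing $S$ to be all $a$-children would give $|N(S)|=|X_1|=n/2$ with $|S|=m$, violating any expansion bound. This is precisely the configuration forbidden by condition~(3) of the vertex-split, $N(Y_a)\cap N(Y_b)\neq\{\emptyset\}$, which forces the $a$- and $b$-neighbourhoods to interleave across the unsplit part. The decisive step is therefore to upgrade this qualitative non-alignment, together with the balanced sizes supplied by $n$ even, into the quantitative statement that only a controlled number of children can share a common $D$-subset of the unsplit part; this bound on shared neighbourhoods is what converts into $|N(S)|\ge\alpha|S|$. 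Finally I would point out why the spectral shortcuts fail here: $G'$ is biregular, so the bound $\lambda_2'\le m'/\sqrt{n_1'n_2'}$ of Theorem~\ref{T1} degenerates to $\lambda_2'\le\lambda_1'$ and says nothing, and since $G'$ is bipartite its two-sided spectral expansion of Definition~\ref{spec} equals $\lambda_1'$; both observations confirm that the expansion must be established combinatorially.
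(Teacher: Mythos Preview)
Your plan rests on a misreading of the vertex-split construction: you split the part of size $m$ (the vertices of degree $n$), but by the paper's definition the split is performed on the \emph{smaller} part $Y$, so that $|X'|=|X|=m$ while $|Y'|=2|Y|=2n$. In $K_{m,n}$ every $y\in Y$ has degree $m$, and it is this vertex that is divided into $y_a,y_b$ with $d_a+d_b=m$; the hypothesis ``$n$ even'' is not there to make the child-degrees exact halves (that would concern $m$), but simply to make the test size $|S|=n/2$ an integer. Consequently your proposed graph $G'$, with $2m$ children of degree $n/2$ on the left and $n$ unsplit vertices on the right, is not the object Theorem~\ref{r4} talks about, and the obstacle you flag (many children sharing a common $D$-subset of the unsplit side, only weakly excluded by condition~(3)) is an artefact of this inverted setup.

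The paper instead takes $S\subseteq X'$ with $|S|=n/2$ and looks at $N(S)\subseteq Y'$, $|Y'|=2n$. Each $x\in X'$ keeps degree $n$ (for every original $y$ it is adjacent to exactly one of $y_a,y_b$), and the degree-balancing condition~(2) forces every $y_a,y_b$ to have about $m/2$ neighbours in $X'$; it is this condition, not~(3), that carries the quantitative weight. The argument is then a short case split on $m=n$, $m=2n$, and $n<m<2n$, reading off $|N(S)|$ directly. Your careful covering count and your spectral remarks are therefore aimed at the wrong graph; once you use the correct split, the expansion direction reverses and the combinatorics become the brief case analysis the paper gives.
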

\begin{proof}
Let $G'=(X'\cup Y', E')$ be the vertex-split of the complete bipartite graph $G.$ Here $|X'|=|X|=m,$ $|Y'|=2|Y|=2n$. Assume that $S\subseteq X'$ with $|S|=\frac{n}{2}.$  
To prove this theorem, consider the following cases.\\
Case 1 : $m=n.$ \\ In this case, when $|S|=\frac{m}{2}$ the cardinality of the neighbourhood of $S$ is $|N(S)|=\frac{m+2}{2}$ which implies that $|N(S)|\geq \alpha |S|$ where $\alpha = (1+\frac{2}{n}).$\\
Case 2 : $m=2n.$\\ In this case, when $|S|=\frac{n}{2}$ the cardinality of the neighbourhood of $S$ is $|N(S)|=n$ which implies that $|N(S)|\geq \alpha |S|$ where $\alpha = 1.$\\
Case 3 : $n < m < 2n.$ \\In this case, when $|S|=\frac{n}{2}$ the cardinality of the neighbourhood of $S$ is $|N(S)|=n+(i-\frac{n}{2}),$ $i=2,3,\dots , \frac{n}{2}$ which implies that $|N(S)|\geq \alpha |S|$ where $\alpha = 1+\frac{(i-\frac{n}{2})}{n},$ where $i=2,3,\dots , \frac{n}{2}.$\\
\\ By definition (\ref{ver}), in all the cases we conclude that the vertex-split of $K_{m,n}$ is a bipartite expander.
\end{proof}
\noindent \textbf{Note:}\\
From the above theorem we say that, vertex-split $G'$ of $G$ is a $\alpha-$ vertex expander.
\begin{corollary}\label{r5}
    The vertex-split $G'$ of a complete bipartite graph $G=K_{m,n}$ where $n=\frac{m}{2}$ is a $(1-\gamma)-$spectral expander.
\end{corollary}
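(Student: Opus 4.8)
The plan is to reduce the statement to a single application of Lemma \ref{T2}, which converts vertex expansion into spectral expansion for regular graphs. To do this I must first check that the two hypotheses of that lemma hold for $G'$: that $G'$ is regular, and that it is a $(1+\epsilon)$-expander with $\epsilon > 0$.

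First I would establish regularity. Writing $n = m/2$ gives $m = 2n$, so in $K_{m,n}$ every $y \in Y$ has even degree $d_y = m = 2n$; by the definition of the vertex-split the split is then balanced, $d_a = d_b = n$, and every vertex of $Y' = Y_a \cup Y_b$ has degree $n$. Since the split redistributes but does not create edges, $|E'| = |E(K_{m,n})| = mn = 2n^2$, and as $|X'| = m = 2n$ every vertex of $X'$ has degree $2n^2/(2n) = n$ as well. Hence $G'$ is $n$-regular, supplying the regularity hypothesis of Lemma \ref{T2} with $d = n$.

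Next I would invoke Theorem \ref{r4} to obtain expansion. The condition $m = 2n$ is exactly the boundary case $n \le m \le 2n$ (with $n$ even), so Theorem \ref{r4} applies and $G'$ is a bipartite vertex expander; for the extremal set $|S| = n/2$ one computes $|N(S)| = n$, giving the ratio $|N(S)|/|S| = 2$, which is bounded strictly away from $1$. Thus $G'$ is a $d$-regular $(1+\epsilon)$-expander for a fixed $\epsilon > 0$ with $d = n$. The final step is then immediate: Lemma \ref{T2} yields a $\gamma > 0$ (specifically $\gamma = \Omega(\epsilon^2/d)$) for which $G'$ has spectral expansion $1 - \gamma$, i.e. $\lambda_2' \le (1-\gamma)\,d$, which is the claimed $(1-\gamma)$-spectral expander property.

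I expect the main obstacle to be conceptual rather than computational: because $G'$ is connected and bipartite it necessarily has $-d$ as an eigenvalue, so $|\lambda_n'| = d$ and a two-sided spectral expansion of the form $\max\{|\lambda_2'|,|\lambda_n'|\} < d$ is impossible. The statement must therefore be read in the normalized one-sided sense of the Note preceding Lemma \ref{T2}, where the spectral expansion is governed by $\lambda_2'$; one should make this interpretation explicit and confirm that the expansion ratio stays strictly above $1$ in the boundary case $m = 2n$ so that $\epsilon$ does not degenerate to $0$.
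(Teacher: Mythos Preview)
Your proposal is correct and follows essentially the same route as the paper: invoke Theorem~\ref{r4} to get that $G'$ is a $(1+\epsilon)$-vertex expander, then apply Lemma~\ref{T2} to convert this into $(1-\gamma)$-spectral expansion with $\gamma=\Omega(\epsilon^{2}/d)$. The paper's proof is two lines and simply asserts $\epsilon=\tfrac{1}{n}$ and $\gamma=\tfrac{1}{n^{2}d'}$ without checking regularity or discussing the bipartite eigenvalue issue; your version is more careful on both counts (explicitly verifying $G'$ is $n$-regular and flagging that the conclusion must be read one-sided in the sense of the Note preceding Lemma~\ref{T2}), and your expansion ratio $|N(S)|/|S|=2$ is in fact the correct reading of the Case~2 computation $|N(S)|=n$, $|S|=n/2$ in Theorem~\ref{r4}.
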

\begin{proof}
From the above theorem we know that $G'$ is a $\alpha-$ vertex expander where $\alpha=1+\epsilon$ where $\epsilon=\frac{1}{n}.$
From lemma \ref{T2}, we can conclude that $G'$ is $(1-\gamma)-$spectral expander where $\gamma=\frac{1}{n^{2}d'},$  where $d'$ is the degree of $G'.$
\end{proof}
\section{Construction of Error correcting code}\label{sec5}
In this section, we prove that the vertex split of a complete bipartite graph is a bipartite expander and construct en error correcting code with good expansion factor.\\ 
Consider a biregular graph $G$ with bipartitions $(X,Y)$ and the corresponding degrees $d_{1}$ and $d_{2}.$ Let $d_{1}=\frac{|X|}{2}.$ 
In this section, we prove that the vertex-split of a biregular bipartite graph is a bipartite expander and then we show that the vertex-split of a biregular bipartite graph forms an expander code.\\
Let $C(G)$ be the error correcting code of block length $n$ obtained from a bipartite graph $G$ and let $\Delta(C(G))$ be the distance of the code $C.$
\begin{lemma}\label{lemt}
Let $G=(X\cup Y, E)$ be a biregular bipartite graph with $|X|=n_{1}$ and $|Y|=n_{2}.$ Let $d_{1}, d_{2}$ be the degrees of the corresponding bipartitions $X,Y$ where $d_{1}=\lfloor \frac{|X|}{2}\rfloor .$ Then vertex-split $G'$ of $G$ is a bipartite expander.
\end{lemma}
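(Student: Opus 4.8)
The plan is to verify the vertex-expander inequality of Definition~\ref{ver} directly, by a double-counting argument on the left part $X'$, thereby generalising the complete-bipartite case treated in Theorem~\ref{r4} to the biregular setting. First I would record the combinatorial effect of the split. Since vertex-splitting merely redistributes the $d_{y}$ edges at each $y\in Y$ between its two copies $y_{a},y_{b}$, the left degrees are unchanged: every $x\in X'$ keeps degree $d_{1}=\lfloor n_{1}/2\rfloor$ in $G'$, whereas every copy in $Y'$ has degree $d_{a}$ or $d_{b}$ with $d_{a}+d_{b}=d_{2}$ and $|d_{a}-d_{b}|\le 1$, hence at most $\lceil d_{2}/2\rceil$. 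I would also note $|X'|=n_{1}$, $|Y'|=2n_{2}$, the biregularity identity $n_{1}d_{1}=n_{2}d_{2}$, and the realizability bound $d_{1}=\lfloor n_{1}/2\rfloor\le n_{2}$ coming from $d_{1}\le|Y|$, which places us (up to the parity boundary) in the regime $n_{2}<n_{1}\le 2n_{2}$.

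Next, for an arbitrary $S\subseteq X'$ I would bound $|N(S)|$ from below by counting the edges between $S$ and $N(S)$ in two ways. On the left there are exactly $d_{1}|S|$ such edges; on the right each vertex of $N(S)\subseteq Y'$ receives at most $\min(|S|,\lceil d_{2}/2\rceil)$ of them. Averaging gives
\[
|N(S)|\;\ge\;\frac{d_{1}|S|}{\min(|S|,\lceil d_{2}/2\rceil)}.
\]
When $|S|\le\lceil d_{2}/2\rceil$ the right-hand side is $d_{1}$, and when $|S|>\lceil d_{2}/2\rceil$ it is $d_{1}|S|/\lceil d_{2}/2\rceil$; substituting $d_{2}=n_{1}d_{1}/n_{2}$ the factor $d_{1}$ cancels and both branches yield $|N(S)|\ge (2n_{2}/n_{1})\,|S|$. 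Thus Definition~\ref{ver} holds with the constant $A=2n_{2}/n_{1}$, which is at least $1$ throughout the admissible regime and strictly larger than $1$ once $n_{1}<2n_{2}$; this is exactly the role of the hypothesis $d_{1}=\lfloor n_{1}/2\rfloor$, since it ties the left degree to $n_{1}$ and thereby fixes the ratio above. Finally I would check admissibility $|S|\le|V(G')|/2$: because $|X'|=n_{1}\le(n_{1}+2n_{2})/2$ precisely when $n_{1}\le 2n_{2}$, every subset of $X'$ is eligible, and the bound is tight at $S=X'$, where connectivity forces $N(X')=Y'$ and $|N(X')|=2n_{2}=A\,n_{1}$.

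The main obstacle I anticipate is bookkeeping through the floors rather than the inequality itself: with $d_{1}=\lfloor n_{1}/2\rfloor$ and $d_{a},d_{b}$ depending on the parity of $d_{2}$, the clean ratio $2n_{2}/n_{1}$ must be replaced by $d_{1}/\lceil d_{2}/2\rceil$ and shown to stay $\ge 1$ for all admissible $n_{1},n_{2}$, which forces a short parity split on $n_{1}$ and $d_{2}$ (mirroring the case distinctions already used in Theorems~\ref{t2} and~\ref{r4}) and a careful treatment of the boundary $n_{1}=2n_{2}$, where $A=1$. A secondary point to settle is that condition~(3) of the vertex-split, the overlap $N(Y_{a})\cap N(Y_{b})\neq\emptyset$, is needed only to keep $G'$ connected and does not weaken the left-to-right counting bound; and that restricting attention to $S\subseteq X'$, rather than arbitrary $S\subseteq V(G')$, is the intended reading of ``bipartite expander'' here, consistent with the proof of Theorem~\ref{r4}.
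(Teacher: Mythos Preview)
Your double-counting argument is sound and more general than what the paper does, but it takes a genuinely different route and lands on a different expansion constant. The paper does \emph{not} treat arbitrary $S\subseteq X'$; instead it implicitly works with the $(n_{1},n_{2},d_{1},\gamma,\alpha)$-expander definition, fixes $\gamma=1/d_{1}'$ so that only sets of size $|S|=n_{1}/d_{1}'=2$ are in play, and then argues that any two left vertices together see at least $\tfrac{n_{1}}{2}+1$ right vertices, giving the specific expansion factor $\alpha=(n_{1}+2)/4$. That particular value of $\alpha$ is what feeds directly into Theorem~\ref{r6} (to verify $\tfrac{D}{2}<\alpha<D$ and compute $\epsilon=\tfrac{n_{1}-2}{2n_{1}}$) and into Corollary~\ref{cor8}.

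Your bound $|N(S)|\ge\bigl(d_{1}/\lceil d_{2}/2\rceil\bigr)|S|\approx (2n_{2}/n_{1})|S|$ is cleaner and holds uniformly in $|S|$, but at $|S|=2$ it only gives $|N(S)|\ge d_{1}=\lfloor n_{1}/2\rfloor$, one short of the paper's $\tfrac{n_{1}}{2}+1$, and your constant $A=2n_{2}/n_{1}$ depends on $n_{2}$ rather than being the fixed $(n_{1}+2)/4$ the subsequent results rely on. So your approach proves the lemma as stated (``$G'$ is a bipartite expander'') with a more robust argument, but it would not slot into the downstream application without reworking Theorem~\ref{r6}; conversely, the paper's argument is narrower (only $|S|\le 2$) but is tailored to produce exactly the $\alpha$ needed for the code construction.
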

\begin{proof}
Let $G'=(X'\cup Y',E')$ be the vertex-split of a biregular bipartite graph $G$ with $|X'|=n_{1}$ and $|Y'|=2n_{2}.$ The degrees of the bipartitions $X'$, $Y'$ are denoted by $d_{1}'$ and $d_{2}'$ respectively. Let $S$ be a subset of $X'$ with $|S|=\frac{n_{1}}{d_{1}'}.$ Since $d_{1}'=\frac{|X'|}{2}$, we have $|S|=2.$ For any subset $S$ of $X'$ the neighbourhood of $S$ is $|N(S)|\geq \frac{n_{1}}{2}+1,$ which implies that $\frac{|N(S)|}{|S|}\geq \alpha$ where $\alpha=(\frac{n_{1}+2}{4})$ is the vertex expansion constant.\\ That is $|N(S)|\geq \alpha |S|$.
This proves the result.
\end{proof}
\begin{theorem}\label{r6}
 The vertex-split of a biregular bipartite graph $G=(X\cup Y, E)$ where $|X|=n_{1}$, $|Y|=n_{2}$ with degree $d_{1}=\frac{|X|}{2}$ is an expander code.
\end{theorem}
\begin{proof}
We proved that the vertex-split of a biregular bipartite graph $G$ is a bipartite expander in Lemma \ref{lemt}. For constructing an efficient error correcting code, let us fix $|Y|$ as $|Y|=n_{2}=max \{x : x|\frac{n_{1}^2}{2}$ and $\frac{n_{1}+2}{4}<x< \frac{n_{1}}{2}\}.$ Usually $\alpha$ is close to $\frac{D}{2}$ where $D$ is the degree of the larger side partition. Here $D=d_{1}=\frac{n_{1}}{2}.$ Then $\frac{D}{2}=\frac{n_{1}}{4}.$ Clearly $\frac{D}{2}<\alpha<D,$ where $\alpha=(\frac{n_{1}+2}{4}).$ Putting $\alpha=D(1-\epsilon),$ we get $\epsilon=\frac{n_{1}-2}{2n_{1}}<\frac{1}{2}.$
\end{proof}
\begin{corollary}\label{cor8}
Let the vertex-split $G'$ of $G$ be a $(n_{1},n_{2},d_{1},\gamma, D(1-\epsilon))$ expander. Then the distance of the ECC corresponding to graph $G'$ is $\Delta(C(G'))\geq \frac{n_{1}(n_{1}+2)}{2d_{1}^{2}}.$
\end{corollary}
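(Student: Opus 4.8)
The plan is to derive the bound by specializing the general distance estimate of Lemma \ref{dist} to the vertex-split graph $G'$ and then substituting the expansion parameters already computed in Theorem \ref{r6} and Lemma \ref{lemt}. Since the hypothesis states that $G'$ is an $(n_{1},n_{2},d_{1},\gamma,D(1-\epsilon))$ expander, Lemma \ref{dist} applies with the left-side size $n=n_{1}$ and left-degree $D=d_{1}$, giving immediately
\begin{equation*}
\Delta(C(G'))\geq 2\gamma(1-\epsilon)n_{1}.
\end{equation*}
The corollary then reduces to inserting the correct values of $\gamma$ and of $D(1-\epsilon)$ and simplifying.

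Next I would read off the two needed quantities from the earlier results. From Theorem \ref{r6} (via Lemma \ref{lemt}) the expansion factor of $G'$ is $\alpha=D(1-\epsilon)=\frac{n_{1}+2}{4}$, with $D=d_{1}=\frac{n_{1}}{2}$. For the density parameter $\gamma$, I would return to the witnessing set constructed in the proof of Lemma \ref{lemt}: the set $S\subseteq X'$ certifying expansion has $|S|=\frac{n_{1}}{d_{1}'}=2$, so the admissible fraction is $\gamma=\frac{|S|}{n_{1}}=\frac{1}{d_{1}}$ (equivalently $\frac{2}{n_{1}}$).

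Finally I would substitute. Writing $2\gamma(1-\epsilon)n_{1}=\frac{2\gamma n_{1}}{D}\,\bigl(D(1-\epsilon)\bigr)$ and plugging in $\gamma=\frac{1}{d_{1}}$, $D=d_{1}$, and $D(1-\epsilon)=\frac{n_{1}+2}{4}$ yields
\begin{equation*}
\Delta(C(G'))\geq \frac{2n_{1}}{d_{1}^{2}}\cdot\frac{n_{1}+2}{4}=\frac{n_{1}(n_{1}+2)}{2d_{1}^{2}},
\end{equation*}
which is the asserted bound.

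The algebra here is routine; the one place demanding care is the bookkeeping of the expander parameters. In particular, I must be sure that the ``$n$'' appearing in Lemma \ref{dist} is identified with $n_{1}=|X'|$ (the left partition) and not with $|Y'|=2n_{2}$, and that $\gamma$ is taken consistently from the specific witnessing set $|S|=2$ of Lemma \ref{lemt} rather than from the generic definition. Each of these identifications affects the constant, so the main (and only modest) obstacle is confirming that $\gamma=\frac{1}{d_{1}}$ is the parameter actually certified by the construction and that it is compatible with the hypothesis that $G'$ is an $(n_{1},n_{2},d_{1},\gamma,D(1-\epsilon))$ expander.
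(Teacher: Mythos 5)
Your proposal matches the paper's proof: both invoke Lemma \ref{dist} with the parameters certified by Lemma \ref{lemt} ($\gamma=\frac{1}{d_{1}}$, $D=d_{1}$, $D(1-\epsilon)=\frac{n_{1}+2}{4}$, left size $n=n_{1}$) and simplify, the identity $d_{1}=\frac{n_{1}}{2}$ making your expression $\frac{n_{1}(n_{1}+2)}{2d_{1}^{2}}$ agree with the direct substitution. Your write-up is in fact more explicit about the bookkeeping (which the paper compresses into ``substituting all these, we get the result''), but the route is the same.
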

\begin{proof}
We proved in Lemma \ref{lemt} that the vertex-split $G'$ of $G$ is a 
$(n_{1},n_{2},d_{1},\gamma, D(1-\epsilon))$ expander, where $\gamma=\frac{1}{d_{1}}, \alpha=\frac{n_{1}+2}{4}$ and $\epsilon=\frac{n_{1}-2}{2n_{1}}.$ 
Substituting all these in Lemma \ref{dist}, we get the result.
\end{proof}
\section{Conclusion}
In the first part of this paper, we defined bipartite quotient matrix of matrices associated with bipartite graphs and proved that the two eigenvalues of the bipartite quotient matrix interlace at the two ends of the spectrum of the adjacency matrix of a bipartite graph. From this interlacing, we obtained the upper bound for the second largest eigenvalue and the lower bound for the second smallest eigenvalue of a bipartite graph. To achieve a more sharper bound we considered the minimally connected graph for three different bipartitions and obtained two new bounds which depend only on the order of the graph. Also, we proved the eigenvalue interlacing for the Laplacian matrix and obtained an upper bound for the second largest Laplacian eigenvalue.\\
In the second part of this paper, we defined vertex-split of a bipartite graph and and proved its connectivity with respect to the second largest eigenvalue. Finally, we proved that the vertex-split of a complete bipartite graph $K_{m,n}$ is a bipartite expander and constructed an efficient ECC with expansion factor $\alpha> \frac{D}{2}$ corresponding to the vertex-split of $K_{m,n}.$

\end{document}